\documentclass[final]{dmtcs-episciences}
\usepackage[utf8]{inputenc}
\usepackage[round]{natbib}

\usepackage{amssymb,amsthm,amsmath}
\usepackage[mathcal]{euscript}
\usepackage[T1]{fontenc}
\usepackage{xspace}
\usepackage{microtype}
\usepackage{pdfsync}
\usepackage{float}
\usepackage[active]{srcltx}
\usepackage{tikz}
\usetikzlibrary{trees,arrows,shapes,snakes,fit,shadows,decorations.text,decorations.pathmorphing,calc}
\tikzstyle{implies}=[line width=0.7pt,->]
\tikzstyle{equiv}=[line width=0.7pt,<->]
\usepackage{hyperref}

\newtheorem{Thm}{Theorem}[section]
\newtheorem{Prop}[Thm]{Proposition}
\newtheorem{Lemma}[Thm]{Lemma}
\newtheorem{Cor}[Thm]{Corollary}

\newtheorem*{definition*}{Definition}
\newtheorem{remark}{Remark}

\usepackage{enumerate}

\numberwithin{equation}{section}

\DeclareSymbolFont{bbold}{U}{bbold}{m}{n}
\DeclareSymbolFontAlphabet{\mathbbold}{bbold}
\newcommand\um{\ensuremath{\mathbbold{1}}}

\def\pv#1{\ensuremath{{\sf#1}}}
\def\eval#1#2{[#1]_{\pv #2}}
\def\obn{\ensuremath{\overline{\pv B_{\mathit{n}}}}\xspace}
\def\Cl#1{\ensuremath{\mathcal #1}}
\def\Om#1#2{\ensuremath{\overline\Omega_{#1}{\sf#2}}}

\def\omup#1#2#3{\ensuremath{\Omega^{#1}_{#2}{\sf{#3}}}}
\def\oms#1#2{\omup{\sigma}{#1}{#2}}
\def\omc#1#2{\omup{\kappa}{#1}{#2}}
\def\omu#1#2{\omup{\um}{#1}{#2}}
\def\rk#1{\ensuremath{\mathop{\rm rank}({#1})}}
\def\rks#1#2{\ensuremath{\mathop{\rm rank}_{#1}({#2})}}
\def\pj#1{\ensuremath{p_{\sf#1}}}
\def\tcl#1{\ensuremath{\mathrm{cl}(#1)}}
\def\tcls#1{\ensuremath{\overline{#1}}}
\def\tclsv#1#2{\ensuremath{\overline{#2}}}
\def\tclin#1#2{\ensuremath{\mathrm{cl}_{#1}(#2)}}
\def\acls#1{\ensuremath{\langle #1\rangle_\sigma}}
\def\kacls#1{\ensuremath{\langle #1\rangle_\kappa}}

\def\weight{\ensuremath{\mathsf{w}}}
\let\le\leqslant
\let\leq\leqslant
\let\ge\geqslant
\let\geq\geqslant

\newcommand\initial{\ensuremath{\,\hat{\phantom{\ }}\,}}
\newcommand\final{\ensuremath{\$}} 
\newcommand\Approx{\ensuremath{\xi}}
\newcommand\sh{\ensuremath{\mathit{sh}}}

\author{%
  J. Almeida\affiliationmark{1}\thanks{Work partially supported by
    CMUP (UID/MAT/ 00144/2013), which is funded by FCT (Portugal) with
    national (MCTES) and European structural funds (FEDER), under the
    partnership agreement PT2020.}%
  \and J. C. Costa\affiliationmark{2}\thanks{Work supported, in part,
    by the European Regional Development Fund, through the program
    COMPETE, and by the Portuguese Government through FCT, under the
    project PEst-OE/MAT/UI0013/2014.}%
  \and M. Zeitoun\affiliationmark{3}\thanks{Work carried out with
    financial support from the French State, managed by the French
    National Research Agency (ANR) in the frame of ANR 2010 BLAN 0202
    01 FREC and of the "Investments for the future" Programme IdEx
    Bordeaux -- CPU (ANR-10-IDEX-03-02).}}

\title{Factoriality and the Pin-Reutenauer procedure}

\affiliation{Centro de Matem\'atica e Departamento de Matem\'atica,
  Faculdade de Ci\^encias,
  Universidade do Porto, Portugal\\
  Centro de Matem\'atica e Departamento de Matem\'atica e Aplica\c
  c\~oes, Universidade do Minho, Portugal\\
  Univ. Bordeaux, LaBRI, UMR 5800, F-33400 Talence, France}

\keywords{pseudovariety, profinite semigroup, profinite topology,
  topological closure, unary implicit signature, pure implicit
  signature, rational language, aperiodic semigroup, Burnside
  pseudovariety, factorial pseudovariety, full pseudovariety,
  Pin-Reutenauer procedure}

\received{2015-6-4}

\accepted{2016-2-2}

\begin{document}
\publicationdetails{18}{2016}{3}{1}{650}
\maketitle

\begin{abstract}
  We consider implicit signatures over finite semigroups determined by
  sets of pseudonatural numbers. We prove that, under relatively simple
  hypotheses on a pseudovariety \pv V of semigroups, the finitely
  generated free algebra for the largest such signature is closed under
  taking factors within the free pro-\pv V semigroup on the same set of
  generators. Furthermore, we show that the natural analogue of the
  Pin-Reutenauer descriptive procedure for the closure of a rational
  language in the free group with respect to the profinite topology
  holds for the pseudovariety of all finite semigroups. As an
  application, we establish that a pseudovariety enjoys this property if
  and only if it is full. 
\end{abstract}

\section{Introduction}
\label{sec:introduction}

\noindent\textbf{Context and motivations.} This paper deals with the computation of the closure of a given rational language within a
relatively free algebra, with respect to a suitable implicit signature and a
profinite topology. A motivation for this line of research is the
\emph{separation problem}, which, given two rational languages~$K$ and~$L$,
asks whether there is a rational language from a fixed class $\mathcal{C}$
containing~$K$ and disjoint from~$L$. The separation problem has several
motivations. First, the membership problem for $\mathcal{C}$ reduces to the
separation problem for $\mathcal{C}$, since a language belongs to the class
$\mathcal{C}$ if and only if it is separable from its complement by a language
from~$\mathcal{C}$.

Furthermore, solving this problem gives more information about the class under
investigation, and is more robust when applying transformations to the class.
For instance, is was proved by \cite{Steinberg:delay-pointlikes:2001} and
\cite{PZplusone:2015} that the classical operator $\pv V\mapsto \pv V*\pv D$
on pseudovarieties preserves decidability of the separation problem, while it
has been shown by \cite{DBLP:journals/ijac/Auinger10} that it does not
preserve decidability of the membership problem (on the other hand, the status
with respect to separation is unknown for other operators that do not preserve
the decidability of membership, such as the power, as shown by
\cite{Auinger&Steinberg:2003}).

Finally, deciding separation for some class can be used to decide membership
for more involved classes: this is for instance a generic result in the
quantifier alternation hierarchy, established by \cite{PZ:icalp14}, that
deciding separation at level $\Sigma_n$ in this hierarchy entails a decision
procedure for membership at level $\Sigma_{n+1}$.

\cite{Almeida:1996d} has related the separation problem with a purely
topological question, which
is the main topic of this paper: the separation
problem has a negative answer on an instance $K,L$ of rational languages if and only if the
closures of~$K$ and~$L$ in a suitable relatively free profinite semigroup, which depends on
the class of separator languages we started from, have a nonempty intersection. Determining whether
such closures intersect can be in turn reformulated in terms of computation of pointlike
two-element sets in a given semigroup.

\smallskip Deciding whether closures of rational languages intersect is often nontrivial, in
particular because the profinite semigroup in question is uncountable in general. Yet,
several classes of languages enjoy a property called \emph{reducibility}\footnote{More
  precisely and technically, reducibility for 2-pointlike sets.} that states that the
closures of two rational languages intersect in the suitable relatively free profinite
semigroup if and only if their traces in a more manageable universe also intersect. This more
manageable universe may in particular be countable, and is therefore amenable to algorithmic
treatment. In summary, reducibility is a property of the class of separators under investigation
(or of the class of semigroups recognizing these separators), which reduces the search
of a witness in the intersection into a simpler~universe.

\smallskip
The most important example from the historical point of view is the class of languages
recognized by finite groups. In this case, the relatively free algebra
is the
free group over some set $X$ of generators, which is indisputably much better understood than
the free profinite group over~$X$. In particular, it is countable. Since it is known that the closures in
the free profinite group of two rational languages intersect if and only if their traces in the
free group also intersect (that is, the class of finite groups enjoys the reducibility
property), this justifies the quest for an algorithm computing the closure in the free group of
a rational language. Such an algorithm is known as the Pin-Reutenauer procedure, which we
describe below, and has been developed along a successful line of
research, see the work of~\cite{Pin&Reutenauer:1991,Pin:1991,Ash:1991,Henckell&Margolis&Pin&Rhodes:1991,Ribes&Zalesskii:1993a,Herwig&Lascar:2000,Auinger:2004,Auinger&Steinberg:2005b}. As
a consequence, the separation problem by group languages is decidable.

\smallskip This framework can be generalized to classes consiting of
other types of semigroups than just groups.
Denote by $\kappa$ the signature consisting of the binary multiplication and
the unary $(\omega-1)$-power, with their usual interpretation in profinite
semigroups. Note that the set of all $\kappa$-terms over $X$ is isomorphic to
the free group over $X$: the mapping sending each generator to itself and
$x^{\omega-1}$ to $x^{-1}$, the inverse of $x$ in the free group, can be
extended to a group isomorphism. More generally, given a pseudovariety \pv V
of finite semigroups, consider the semigroup $\omc XV$, which can be seen as
the set of all interpretations over \pv V of $\kappa$-terms over~$X$. This
subalgebra of the pro-\pv V semigroup over $X$ is countable and thus, as said
above, amenable to algorithmic treatment. One central problem in this
context is the $\kappa$-word problem: given two $\kappa$-terms over~$X$,
decide whether they represent the same element in the relatively free
profinite semigroup in the pseudovariety under consideration. This problem has
already been investigated for several classical pseudovarieties besides that
of finite groups, for instance by \cite{Almeida:J:1991,Almeida:1994a} for the
pseudovariety \pv J of all finite $\mathcal{J}$-trivial semigroups,
by~\cite{Almeida&Zeitoun:2004:sal,Almeida&Zeitoun:AutomTheorApproac:2007} for
the pseudovariety \pv R of all finite $\mathcal{R}$-trivial semigroups,
by~\cite{Costa:2001} for the pseudovariety \pv{LSl} of all finite semigroups
whose local monoids are semilattices and by \cite{Moura:2009b} for the
pseudovariety \pv{DA} of all finite semigroups whose regular
$\mathcal{J}$-classes are aperiodic semigroups. Moreover, reducibility has
been shown to hold for several pseudovarieties, in particular by
\cite{Almeida:2002a} for \pv J, by \cite{Almeida&Costa&Zeitoun:2004} for \pv
R, by~\cite{Costa&Teixeira:Tameness-pseudovariety-LSl:2004} for \pv{LSl} and,
as already mentioned, by \cite{Ash:1991,Almeida&Steinberg:2000a} for 
the pseudovariety \pv G of all finite groups. A further example is the
pseudovariety \pv A of aperiodic
languages which, in a forthcoming paper, will be derived from the work of
\cite{Henckell:1988}, recently revisited
by \cite{PZ:lics14,PZ:FOfull16}, from which one can derive reducibility
of this class. In other words, for these classes of languages, the separation
problem reduces to testing that the intersection of the closures of two given
rational languages in the suitable countable relatively free algebra is empty. This
motivates designing algorithms to compute closures of rational languages in
these relatively free algebras. This is one of the main contributions of this~paper.

\medskip\noindent\textbf{The Pin-Reutenauer procedure.}
In the core of the paper, we investigate how the profinite closure of rational languages in
free unary algebras interacts with concatenation and iteration. The natural guide for this
work is provided by a procedure proposed by \cite{Pin&Reutenauer:1991} for
the case of the free group. This procedure gives a way to compute a representation of
the closure of a rational language inductively on the structure of the rational expression. Of
course, the closure of a union is the union of the closures. The other two rules of the
Pin-Reutenauer procedure deal with concatenation and iteration. For instance, when computing
in \omc XV, the smallest subalgebra of the pro-\pv V semigroup closed under multiplication
and $(\omega-1)$-power, establishing the Pin-Reutenauer procedure amounts to showing the
following equalities:
\begin{align*}
  \tclsv V{KL} &= \tclsv V{K}\,\tclsv V{L},\\
  \tclsv V{L^+} &= \kacls{\tclsv VL},
\end{align*}
where $\tclsv VL$ is the topological closure of $L$ in \omc XV, and $\kacls L$ is the subalgebra of \omc
XV generated by $L$.  Notice that these equalities yield a recursive procedure to compute a
finite algebraic representation of $\tclsv V L$ when $L$ is rational.  Such a finite
representation may not immediately yield algorithms to decide membership in $\tclsv VL$ for a
given rational language $L$, but it reduces the problem of computing topological closures
$\tclsv VL$ to the problem of computing algebraic closures $\kacls L$. Since the signature
$\kappa$ is finite, this representation also provides a recursive enumeration of elements of $\tclsv
VL$. Additionally, assume that the following two properties hold:
\begin{enumerate}[(1)]
\item the word problem for $\kappa$-terms over \pv V is decidable,
\item the pseudovariety \pv V is $\kappa$-reducible.
\end{enumerate}
Then one can decide the separation problem of two rational languages $K,L$ by a \pv
V-recognizable language. Indeed, \cite{Almeida:1996d} has shown that this problem is equivalent to checking whether
the closures of $K$ and $L$ in \Om XV intersect, which by reducibility is equivalent to
checking whether $\tclsv VK\cap \tclsv VL\not=\emptyset$. In turn, this may be tested by
running two semi-algorithms in parallel:
\begin{enumerate}[(1)]
\item one that enumerates elements of $\tclsv VK$ and $\tclsv VL$ and
  checks, using the solution over \pv V of the word
  problem for $\kappa$-terms, whether there is some common element;
\item another one that enumerates all potential \pv V-recognizable separators.
\end{enumerate}
Thus, the Pin-Reutenauer procedure is one of the ingredients to understand why a given class
has decidable separation problem.

\medskip\noindent\textbf{Contributions.} It has been established recently by
\cite{ACZ:Closures:14} that The Pin-Reutenauer procedure holds for a number of
pseudovarieties. However, the results of this paper rely on independent,
technically nontrivial results for the pseudovariety \pv A of aperiodic
semigroups: first, it was proved that the Pin-Reutenauer procedure is valid
for \pv A using the solution of the word problem for the free aperiodic
$\kappa$-algebra given by
\cite{McCammond:1999a,HuschenbettKufleitner:omega:STACS14,Almeida&Costa&Zeitoun:normal_forms}.
Then, a transfer result was established to show that it is also valid for
subpseudovarieties of~\pv A.

\smallskip
In this paper, we revisit the Pin-Reutenauer procedure, obtaining general results with
simpler arguments. We consider unary signatures, made of multiplication and operations of
arity 1. Our main result, Theorem~\ref{t:main}, establishes that the Pin-Reutenauer procedure
holds for the pseudovariety \pv S of all finite semigroups, for unary signatures satisfying
an additional technical condition, which is met for~$\kappa$. The fact that rational
languages are involved is crucial, since, as observed by~\citet[p.~10]{ACZ:Closures:14},
the equality $\tcls{KL}=\tcls K\cdot\tcls L$ fails for some languages $K,L\subseteq
X^+$, where closures are taken with respect to~\pv S and the 
signature $\kappa$.

This result is obtained by first investigating a property named
\emph{factoriality}. Factoriality of \pv V with respect to, say, the signature $\kappa$ means that \omc
XV is closed under taking factors in \Om XV. It was shown by \cite{ACZ:Closures:14} that if
\pv V is factorial, then the Pin-Reutenauer procedure holds with respect to\ concatenation, that is,
$\tclsv V{KL} = \tclsv V{K}\,\tclsv V{L}$ for arbitrary $K,L$ (not just for rational
ones). However, it was also noted that the pseudovariety \pv S cannot be factorial for
nontrivial countable signatures, such as $\kappa$. In contrast, we show that any nontrivial
pseudovariety of semigroups~\pv V closed under concatenation is factorial for the signature
\um{} consisting of multiplication and \emph{all} unary operations. As an application, we
obtain a new proof that the minimum ideal of the free pro-\pv V semigroup on at least two
generators contains no \um-word. This property is a weaker version of a result
obtained by \cite{Almeida&Volkov:2002b}. Besides the  independent
interest of such results, the technical tool used to prove them, named \emph{factorization history},
is also the key to establish that the Pin-Reutenauer is valid for \pv S. We further characterize
pseudovarieties in which the Pin-Reutenauer procedure holds in terms of an abstract property
named fullness, introduced by \cite{Almeida&Steinberg:2000a}.  The main idea is that the validity
of the Pin-Reutenauer procedure for a pseudovariety \pv V is inherited by a
subpseudovariety \pv W, as established by \cite{ACZ:Closures:14}, provided both \pv V and \pv W are full.
Conversely, we prove that if the Pin-Reutenauer procedure works for \pv V, then \pv V is full.  Since
the pseudovariety of all finite semigroups is full, this yields that a pseudovariety enjoys
the Pin-Reutenauer property if and only if it is full.

Finally, we show that a variation of the Pin-Reutenauer procedure, known to hold in
the case of all groups, also holds for pseudovarieties of groups in which every finitely generated
subgroup of the free $\kappa$-algebra is~closed. 

\medskip\noindent\textbf{Organization.} The paper is organized as follows. In Section~\ref{sec:um-factoriality}, we
introduce the notion of history of a factorization and we show that any nontrivial pseudovariety closed under
concatenation product is \um-factorial. In Section~\ref{sec:closures},
we establish that the Pin-Reutenauer property holds for
\pv S and unary signatures satisfying an additional condition. In Section~\ref{sec:PR}, we relate the Pin-Reutenauer
property with fullness, in the general case and in the case of pseudovarieties of groups.

\section{\texorpdfstring{\um}1-factoriality}
\label{sec:um-factoriality}

We assume that the reader has some familiarity with profinite semigroups. For
details, we refer the reader to the books of
\cite{Almeida:1994a,Rhodes&Steinberg:2009qt} and to the article of
\cite{Almeida:2003c}. Here, we briefly introduce the required notation and
key~notions.

\medskip\paragraph{\textbf{Preliminaries.}}
Throughout the paper, we work with a finite alphabet $X$. For a
pseudovariety \pv V of semigroups, we denote by $\Om XV$ the free
pro-\pv V semigroup generated by $X$. Elements of $\Om XV$ are called
\emph{$X$-ary implicit operations} over~\pv V. See the paper of \cite{Almeida:1994a} for details.

An \emph{implicit signature}, as defined by \cite{Almeida&Steinberg:2000a}, is a set of
implicit operations of finite arity including the formal binary
multiplication. A \emph{$\sigma$-semigroup} is an algebra in the
signature $\sigma$ whose multiplication is associative.  Thus,
$\sigma$-semigroups form a Birkhoff variety. We call an
element of the free $\sigma$-semigroup generated by $X$ a
\emph{$\sigma$-term}. For convenience, we allow the empty~$\sigma$-term.

Every pro-\pv V semigroup has a natural structure of
$\sigma$-semigroup. We denote by $\oms XV$ the sub-$\sigma$-semigroup
of \Om XV generated by $X$. A \emph{$\sigma$-word over \pv V} is an element of
$\oms XV$. We denote by $\eval \_V$ the surjective homomorphism of
$\sigma$-semigroups that associates to a $\sigma$-term $t$ its
interpretation $\eval tV$ in $\oms XV$. When $t$ is a word and $\pv V$ is clear
from the context, we write $t$ instead of~$\eval t V$.

\medskip
\paragraph{\textbf{Unary implicit signatures.}}
Let $\widehat{\mathbb{N}}$ be the profinite completion of~$(\mathbb{N},+)$,
\emph{i.e.}, the free profinite monoid on one generator. We denote by \um\ the
implicit signature consisting of multiplication together with all implicit operations
$\_\vphantom{|}^\alpha$ with $\alpha\in\widehat{\mathbb{N}}\setminus\mathbb{N}$. An implicit
signature is called \emph{unary} if it is contained in~\um\ and it contains at least one
unary implicit operation.  For a unary implicit signature~$\sigma$, an element
$\alpha\in\widehat{\mathbb{N}}$ such that the $\alpha$-power operation
$\_\vphantom{|}^\alpha$ belongs to~$\sigma$ is said to be a \emph{$\sigma$-exponent}. Note
that by definition of \um, every $\sigma$-exponent is infinite. An important example of a
unary implicit signature is the signature $\kappa$, for which $\omega-1$ is the only
$\kappa$-exponent.

The \emph{$\sigma$-rank} $\rks\sigma t$ of a $\sigma$-term $t$ is the
maximal nesting depth of elements of $\sigma$, disregarding multiplication,
that occur in $t$. It is defined inductively by
$\rks\sigma{t_1t_2}=\max(\rks\sigma {t_1},\rks\sigma {t_2})$ and
$\rks\sigma{\pi(t_1,\ldots, t_n)}=1+\max_{1,\ldots ,n}(\rks\sigma{t_i})$
in case $\pi$ is an operation from $\sigma$ which is not multiplication.
For a $\sigma$-term
\begin{equation}
  \label{eq:t}
  t=t^{}_0s_1^{\alpha_1}t^{}_1\cdots s_m^{\alpha_m}t^{}_m,
\end{equation}
where the $t_i$'s and the $s_j$'s are $\sigma$-terms such that
$\rks\sigma{t_i}\leq\rks\sigma {s_j}=\rks\sigma t-1$ and each $\alpha_j$ is a
$\sigma$-exponent, we denote by $\nu_\sigma(t)$ the number $m$ of
subterms $s_i^{\alpha_i}$ of $t$. When $\sigma$ is clear from the
context, we may write $\rk t$, $ \nu(t)$ instead of
$\rks\sigma t$, $\nu_\sigma(t)$, respectively.

\medskip
\paragraph{\textbf{Complete unary implicit signatures.}}
A unary implicit signature $\sigma$ is said to be \emph{complete} if the set of
$\sigma$-exponents is stable under the mappings $\alpha\mapsto\alpha-1$ and
$\alpha\mapsto\alpha+1$. Note that \um~is complete, while $\kappa$ is not. The intersection of a nonempty
set of complete unary signatures either consists of multiplication solely, or is again a
complete unary signature.
Therefore, the smallest complete unary signature containing a given unary signature $\sigma$
exists. It is called the \emph{completion} of~$\sigma$ and it is denoted by~$\bar\sigma$. By
definition, we have $\sigma\subseteq\bar\sigma$, and a signature $\sigma$ is complete if and
only if $\bar\sigma=\sigma$. Note that  for every \um-exponent~$\alpha$ and every $u\in\Om XS$, the equalities
$u^{\alpha-1}=u^\alpha u^{\omega-1}$ and $u^{\alpha+1}=u^\alpha u$ hold. This proves the following
useful fact.

\begin{remark}
  \label{rem:inclusive}
  Let $\sigma$ be a  unary signature containing $\kappa$. Then $\omup{\bar\sigma}XV=\oms XV$.
\end{remark}

\subsection{Factorization sequences}
\label{sec:fact-sequ}

For $\alpha\in\widehat{\mathbb N}$, we choose a sequence
$(\Approx_n(\alpha))_n$ of natural integers converging to $\alpha$. One
can assume that $(\Approx_n(\alpha))_n$ is constant if $\alpha$ is finite,
or strictly increasing otherwise. Let~$t$ be a \um-term. We denote by
$\Approx_n(t)$ the word obtained by replacing each subterm $v^\alpha$
with $\alpha$ infinite by $v^{\Approx_n(\alpha)}$, recursively. For
instance, $\Approx_n((a^\alpha
b)^\beta)=(a^{\Approx_n(\alpha)}b)^{\Approx_n(\beta)}$.  The
factorizations $\Approx_n(t)=x\cdot y$ with $x\in X^*$ and $y\in
X^+$ may be obtained recursively as follows:
\begin{itemize}
\item if $\rk t=0$, then $\Approx_n(t)=t$ for all $n$ and there are
  $|t|$ such factorizations of~$\Approx_n(t)$;
\item if $\rk t>0$ and 
  $t=t^{}_0s_1^{\alpha_1}t^{}_1\cdots s_m^{\alpha_m}t^{}_m$, where the $t_i$'s
  and the $s_j$'s are $\sigma$-terms such that
  $\rk{t_i}\leq\rk{s_j}=\rk{t}-1$ (where the $t_i$'s may be empty), then the factorizations of
  $\Approx_n(t)$ are those of the following forms:
  \begin{align}
    \label{item:factorization-1}
      \Approx_n(t)&=       \Approx_n(t_0s_1^{\alpha_1}\cdots t_{j-1}s_j^{\alpha_j})\,t_j'
      \cdot       t''_j\,\Approx_n(s_{j+1}^{\alpha_{j+1}}t_{j+1}\cdots s_m^{\alpha_m}t_m)     \\
\noalign{where $\Approx_n(t_j)=t'_jt''_j$, and}
    \label{item:factorization-2}
    \Approx_n(t)&=     \Approx_n(t_0s_1^{\alpha_1}\cdots
      s_{j-1}^{\alpha_{j-1}}t_{j-1}s_j^k)s'_j
      \cdot       s''_j\Approx_n(s_j^\ell t_js_{j+1}^{\alpha_{j+1}}\cdots
      s_m^{\alpha_m}t_m)   \end{align}
  where $\Approx_n(s_j)=s'_js''_j$, 
  $k,\ell\in\mathbb{N}$, and $k+\ell+1=\Approx_n(\alpha_j)$.
\end{itemize}
The condition $y\in X^+$, forbidding $y$ to be empty, is used recursively
to ensure that each factorization of $\Approx_n(t)$ is either of type
\eqref{item:factorization-1} or \eqref{item:factorization-2}, but not of
both types: one can verify that each factorization of $\Approx_n(t)$ is
obtained by exactly one of the equations \eqref{item:factorization-1}
and \eqref{item:factorization-2}, where $j,t'_j,t''_j$ (in case
\eqref{item:factorization-1}), or $j,k,\ell,s'_j,s''_j$ (in case
\eqref{item:factorization-2}) are uniquely determined. In particular, the
factorization
\[
  \Approx_n(t_0s_1^{\alpha_1}\cdots
  t_{p-1}s_p^{\alpha_p}t_p)\cdot\,\Approx_n(s_{p+1}^{\alpha_{p+1}}t_{p+1}\cdots
  s_m^{\alpha_m}t_m)
\]
cannot be of type~\eqref{item:factorization-1},
since this would force $t''_p$ to be empty, which is forbidden. This
factorization is in fact of type~\eqref{item:factorization-2} with
$j=p+1$ and $k=0$.

As an example, for $t=a^\omega ba^\omega$, the expression \eqref{eq:t} is
obtained for $m=2$, where $t_0$ and $t_2$ are empty, while
$s_1^{\alpha_1}=a^\omega$, $t_1=b$, and
$s_2^{\alpha_2}=a^\omega$. Assuming $\Approx_n(\omega)=n!$, we obtain
\begin{itemize}
\item[--] the factorization $a^{n!}\cdot ba^{n!}$ 
  by~\eqref{item:factorization-1}, with $j=1$, $t'_1$ empty and $t''_1=b$;
\item[--] the factorization $a^{n!}b\cdot a^{n!}$ 
  by~\eqref{item:factorization-2}, $j=2$, $k=0$, $\ell=n!-1$, $s'_2$ empty and $s''_2=a$.
\end{itemize}

\medskip
The \emph{history} $h_n(t,x,y)$ of a factorization
$\Approx_n(t)=xy$ is defined recursively as follows:
\begin{itemize}
\item[--] if $\rk{t}=0$, then $h_n(t,x,y)=(x,y)$;
\item[--] if $\rk{t}>0$ and the factorization is of the
  form~\eqref{item:factorization-1}, then $h_n(t,x,y)$ is obtained
  by concatenating the pair $(1,j)$ with $h_n(t^{}_j,t'_j,t''_j)$;
\item[--] if $\rk{t}>0$ and the factorization is of the
  form~\eqref{item:factorization-2}, then $h_n(t,x,y)$ is obtained 
  by concatenating the 4-tuple $(2,j,k,\ell)$ with
  $h_n(s^{}_j,s'_j,s''_j)$.
\end{itemize}
The history $h_n(t,x,y)$ is thus a word on an alphabet that depends on the
integer~$n$, which gives information on how the word $\Approx_n(t)$ is
split by the factorization $xy$. Note that the length of the history $h_n(t,x,y)$ is at most
$\rk{t}+1$.

The \emph{simplified history} $\sh_n(t,x,y)$ of the factorization
$\Approx_n(t)=xy$ is obtained from the history $h_n(t,x,y)$ by replacing
each 4-tuple $(2,j,k,\ell)$ by $(2,j)$.  On the other hand, dropping the
first two components of each letter of the history $h_n(t,x,y)$, we
obtain a word whose letters are pairs of nonnegative integers, which we
identify with an integer vector in even dimension, called the
\emph{exponent vector}. A factorization of~$\Approx_n(t)$ can be
recovered from its history but may not be recoverable from its
simplified history without the extra information contained in the
exponent vector.

\medskip
Observe that $(\Approx_n(t))_n$ converges to $\eval t S$ in $\Om XS$.
We will be interested in sequences $(x_n,y_n)_n$ such that
$x_ny_n=\Approx_n(t)$. We call such a sequence a \emph{factorization
  sequence for~$t$}.

\medskip It will be convenient in the proofs to work with factorization sequences having
additional properties. Note that the set of simplified histories of factorizations
of~$\Approx_n(t)$ is finite and depends only on~$t$. Moreover, the dimension of all exponent
vectors is bounded by $2\rk t$. Therefore, any factorization sequence for $t$ has a
subsequence whose
\begin{enumerate}[$(a)$]
\item induced sequence of simplified histories is constant,
\item induced sequence of exponent vectors belongs to $\mathbb{N}^d$ for
  some constant $d$ and
  converges in~$\widehat{\mathbb{N}}^d\setminus\mathbb{N}^d$.
\end{enumerate}
We call \emph{filtered} a sequence with these properties. An
application of this notion is the following simple statement.

\begin{Lemma}
  \label{lem:converging-factorization-sequences}
  Let $(x_n,y_n)_n$ be a factorization sequence for a \um-term. Then both
  $(x_n)_n$ and $(y_n)_n$ have subsequences converging in \Om XS to
  $\um$-words.
\end{Lemma}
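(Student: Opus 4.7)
The plan is to combine compactness of $\Om XS$ with induction on $\rk t$. By the remarks immediately preceding the statement, any factorization sequence $(x_n, y_n)_n$ for $t$ admits a filtered subsequence; since $\Om XS$ is compact and metrizable, further extraction yields a subsequence along which both $(x_n)_n$ and $(y_n)_n$ converge to some limits $u, v \in \Om XS$. It then remains to show $u, v \in \omu XS$, which I would prove by induction on $\rk t$. The base case $\rk t = 0$ is immediate: $\Approx_n(t) = t$ is a constant word with only finitely many factorizations, so a filtered subsequence is eventually constant and $u, v$ are finite words, hence $\um$-words.

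For the inductive step, I would inspect the (common) first letter of the simplified history of the filtered sequence. If it is of the form $(1, j)$, then~\eqref{item:factorization-1} writes $x_n = \Approx_n(t_0 s_1^{\alpha_1} \cdots s_j^{\alpha_j}) \cdot p_n$ and $y_n = q_n \cdot \Approx_n(s_{j+1}^{\alpha_{j+1}} \cdots t_m)$, where $(p_n, q_n)$ is a factorization sequence for the strictly lower rank term $t_j$. By induction, some subsequence of $(p_n, q_n)$ converges to $\um$-word limits; the outer factors converge in $\Om XS$ to the interpretations of $\um$-subterms of $t$, which are themselves $\um$-words. Continuity of multiplication in $\Om XS$ then forces $u, v \in \omu XS$. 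If the first letter is $(2, j)$, then~\eqref{item:factorization-2} gives $x_n = \Approx_n(t_0 s_1^{\alpha_1} \cdots s_j^{k_n}) \cdot p_n$ and a symmetric expression for $y_n$, now with $(p_n, q_n)$ a factorization sequence for $s_j$ and with $k_n, \ell_n$ appearing as entries of the exponent vector. Filtering guarantees $k_n \to \beta$ and $\ell_n \to \gamma$ in $\widehat{\mathbb N}$, and continuity of the power operation $(a, \alpha) \mapsto a^\alpha$ on $\Om XS \times \widehat{\mathbb N}$ yields $\Approx_n(s_j)^{k_n} \to \eval{s_j}{S}^\beta$, which is a $\um$-word whether $\beta$ is finite (as a finite power of a $\um$-word) or infinite (directly by the definition of $\um$). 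Combined with the induction hypothesis applied to $(p_n, q_n)$, the same argument as in case $(1)$ concludes.

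The main obstacle is not conceptual but notational: one has to track precisely how the recursive structure of factorizations, encoded by the simplified history, is reflected in the decomposition of $x_n$ and $y_n$, and check that each component either converges as the interpretation of a $\um$-subterm, or as a power $a^\beta$ with $a \in \omu XS$ and $\beta \in \widehat{\mathbb N}$, or falls under the induction hypothesis. The notion of filtered sequence is tailored precisely so that the qualitative shape (constant simplified history) and the quantitative data (converging exponent vector) both stabilize enough for the rank induction to close cleanly.
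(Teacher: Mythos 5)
Your proposal is correct and matches the paper's own argument: reduce to a filtered sequence, induct on $\rk t$, and split on whether the first letter of the simplified history is $(1,j)$ or $(2,j)$, using the induction hypothesis on the inner factorization of the lower-rank subterm ($t_j$ or $s_j$) together with convergence of the outer pieces. The paper writes out only the $(1,j)$ case and states that $(2,j)$ is similar; you supply the details of the $(2,j)$ case via joint continuity of $(a,\alpha)\mapsto a^\alpha$, which is the intended argument, so the two proofs coincide in substance.
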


\begin{proof}
  Let $t$ be the \um-term of the statement. By the above, one may assume that the sequence $(x_n,y_n)_n$ is
  filtered. We proceed by induction on $\rk{t}$. The case $\rk{t}=0$ is straightforward. Otherwise,
  let $t=t^{}_0s_1^{\alpha_1}t^{}_1\cdots s_m^{\alpha_m}t^{}_m$ as in \eqref{eq:t}. There are two
  cases, according to the first letter of $\sh_n(t,x,y)$, which can be of the form $(1,j)$ or
  $(2,j)$. Both cases are similar, so assume that it is of the form $(1,j)$. Therefore, the
  factorization $x_{n}y_{n}$ of $\Approx_{n}(t)$ is given by \eqref{item:factorization-1}, hence
  $x_{n}=\Approx_{n}(t^{}_0s_1^{\alpha_1}\cdots t^{}_{j-1}s_j^{\alpha_j})\,t'_{j,n}$ and
  $y_{n}=t''_{j,n}\,\Approx_n(s_{j+1}^{\alpha_{j+1}}t^{}_{j+1}\cdots s_m^{\alpha_m}t^{}_m)$ where
  $\Approx_n(t^{}_j)=t'_{j,n}t''_{j,n}$. By definition, $t_j$ is a \um-term and $\rk{t_j}<\rk{t}$,
  whence by induction $(t'_{j,n})_n$ and $(t''_{j,n})^{}_n$ have subsequences converging to \um-terms,
  respectively $t'_j$ and $t''_j$. Therefore, $(x_n)_n$ (resp.~$(y_n)_n$) has a subsequence converging to
  the \um-term $t^{}_0s_1^{\alpha_1}\cdots t^{}_{j-1}s_j^{\alpha_j}\cdot t'_{j}$ (resp.~$t''_j\cdot
  s_{j+1}^{\alpha_{j+1}}t^{}_{j+1}\cdots s_m^{\alpha_m}t^{}_m$).
\end{proof}

\subsection{Factoriality of some pseudovarieties}
\label{sec:fact-some-pseud}

A pseudovariety of semigroups is said to be \emph{closed under concatenation} if the
corresponding variety of rational languages has that property. A nontrivial pseudovariety \pv
V is closed under concatenation if and only if it contains \pv A, the pseudovariety of
aperiodic (or group-free) semigroups, and the multiplication of the profinite semigroup \Om
XV is an open mapping for every finite
alphabet~$X$ as proved by~\cite{Almeida&ACosta:2007a} based on results
of~\cite{Straubing:Aperiodic-homomorphisms-concatenation-product:1979:a}
(in the monoid case)
and~\cite{Chaubard&Pin&Straubing:Actions-wreath-products-C-varieties:2006:a}
(in the semigroup case) characterizing such pseudovarieties in
terms of certain algebraic closure properties.

\medskip A pseudovariety \pv V is said \emph{$\sigma$-factorial} if,
for every finite alphabet $X$, every factor in $\Om XV$ of a
$\sigma$-word over \pv V is also a $\sigma$-word over \pv V. Note that the
pseudovariety~\pv S is not $\kappa$-factorial, since $x^\alpha$ is a
prefix of $x^\omega$ for every $\alpha\in\widehat{\mathbb{N}}$.

\begin{Thm}
  \label{t:S-1-factorial}
  Let \pv V be a pseudovariety closed under concatenation. Then~\pv V is \um-factorial.
\end{Thm}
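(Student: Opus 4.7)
The plan is to combine the openness of multiplication in $\Om XV$ with Lemma~\ref{lem:converging-factorization-sequences}. Factoriality for general factors reduces to the binary case: if $u = pwq$ in $\Om XV$ is a factor decomposition of a $\um$-word $u \in \omu XV$, then the prefix $pw$ is a $\um$-word (by the binary case), and in turn $w$ is a suffix of the $\um$-word $pw$, hence also a $\um$-word. So fix $u = \eval tV \in \omu XV$ and a factorization $u = vw$ in $\Om XV$; the task is to prove $v, w \in \omu XV$.

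Since $\pv V$ is closed under concatenation, the multiplication $\mu \colon \Om XV \times \Om XV \to \Om XV$ is open. Combined with the convergence $\Approx_n(t) \to u = vw$ in $\Om XV$, a standard consequence of openness (using a countable descending basis of neighborhoods of the pair $(v,w)$) produces factorizations $\Approx_n(t) = a_n b_n$ in $\Om XV$ with $a_n \to v$ and $b_n \to w$. A crucial observation is that, because $\pv V$ contains the pseudovariety $\pv N$ of nilpotent semigroups (through $\pv A$), the canonical projection $\Om XV \to \Om X{N}$, whose codomain may be identified with $X^+ \cup \{0\}$, forces both $a_n$ and $b_n$ to be genuine finite words in $X^*$. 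Indeed, they project to nonzero elements of $\Om X{N}$, hence to words; and any element of $\Om XV$ whose $\pv N$-projection is a specific word $w \in X^+$ is necessarily equal to $w$, since the points of $X^+$ are isolated in the $\pv N$-topology so a sequence of words approximating such an element stabilizes. Consequently $(a_n, b_n)_n$ is a (word-level) factorization sequence for the $\um$-term $t$.

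Applying Lemma~\ref{lem:converging-factorization-sequences} to this factorization sequence produces subsequences along which $a_n \to v'$ and $b_n \to w'$ in $\Om XS$, with $v', w' \in \omu XS$. Let $\pj V \colon \Om XS \to \Om XV$ denote the canonical continuous surjective $\um$-homomorphism. Since $\pj V$ fixes $X^*$ pointwise, continuity yields $\pj V(v') = \lim \pj V(a_n) = \lim a_n = v$, and similarly $\pj V(w') = w$. Because $\pj V$ is a $\um$-morphism, it maps $\omu XS$ into $\omu XV$, so $v, w \in \omu XV$, as required. The technical crux is the second step: verifying that the factorizations delivered by openness are forced to be word-level, so that Lemma~\ref{lem:converging-factorization-sequences} actually applies. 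Once this is in hand, the projection to $\Om XV$ completes the proof routinely.
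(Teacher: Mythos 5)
Your proof is correct and follows essentially the same route as the paper's: use openness of multiplication in $\Om XV$ to obtain factorizations $\Approx_n(t)=v_nw_n$ converging to $(v,w)$, apply Lemma~\ref{lem:converging-factorization-sequences} to extract subsequences converging in $\Om XS$ to \um-words, and project back to $\Om XV$ via the continuous $\um$-homomorphism $\pj V$. You helpfully make explicit two steps the paper leaves implicit: the reduction of general factors $u=pwq$ to the two-factor case, and the verification (via the projection onto $\Om X{N}\cong X^+\cup\{0\}$, where words are isolated) that the factorizations produced by openness are genuinely word-level — a point worth stating, since openness a priori yields factorizations in $\Om XV$ rather than in $X^+$, and Lemma~\ref{lem:converging-factorization-sequences} requires the latter.
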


\begin{proof}
  The statement is obvious if \pv V is trivial. Otherwise, let $u=vw$ be a factorization
  in~\Om XV of an arbitrary element of~\omu XV. Let $t$ be a \um-term such that $\eval t
  V=u$. Since the sequence $(\Approx_n(t))_n$ converges to $u=vw$ in \Om XV and the
  multiplication is open in~\Om XV, for all sufficiently large $n$, each $\Approx_n(t)$ may
  be factorized as $\Approx_n(t)=v_nw_n$ in such a way that $\lim v_n=v$ and $\lim w_n=w$.

  By Lemma~\ref{lem:converging-factorization-sequences}, both $(v_n)_n$
  and $(w_n)_n$ have  subsequences converging, in $\Om XS$, to \um-words over \pv S.
  Therefore, in $\Om XV$, these subsequences  converge to  \um-words
  over \pv V, so that $v$ and $w$ are actually $\um$-words over $\pv V$.
\end{proof}

For a pseudovariety \pv H of groups, $\overline{\pv H}$ denotes the pseudovariety of all
finite semigroups whose subgroups lie in~\pv H.  In particular, when \pv H is the trivial
pseudovariety, then $\overline{\pv H}=\pv A$. It is a well-known and elementary fact that
$\overline{\pv H}$ is always closed under concatenation. Denote by $\pv B_n$ the Burnside
pseudovariety of all finite groups of exponent dividing $n$. The pseudovariety \obn
 is thus defined by the pseudoidentity $x^{\omega+n}=x^\omega$.

In the following result, the special case $n=1$, corresponding to the pseudovariety \pv A,
was first shown by~\cite{Almeida&Costa&Zeitoun:normal_forms} with a much more involved
proof.

\begin{Cor}
   \label{c:Bn-k-factorial}
   For every positive integer $n$ the pseudovariety \obn is $\kappa$-factorial. In
   particular, the pseudovariety \pv A is $\kappa$-factorial.
\end{Cor}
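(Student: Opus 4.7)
The plan is to reduce $\kappa$-factoriality of $\obn$ to the $\um$-factoriality established by Theorem~\ref{t:S-1-factorial}. Since $\obn$ is closed under concatenation, as recalled just before the statement, Theorem~\ref{t:S-1-factorial} applies directly and yields that $\obn$ is $\um$-factorial. It therefore suffices to prove the collapse $\omu X\obn = \omc X\obn$, i.e.\ that every $\um$-word over $\obn$ already equals a $\kappa$-word. Once this is known, any factorization $u = vw$ in $\Om X\obn$ of an element $u \in \omc X\obn \subseteq \omu X\obn$ has $\um$-word factors $v,w$ by Theorem~\ref{t:S-1-factorial}, hence $\kappa$-word factors by the collapse.

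To establish the collapse, I would exploit the defining pseudoidentity $x^{\omega+n} = x^\omega$ of~$\obn$: it forces the maximal subgroups of $\Om X\obn$ to have exponent dividing~$n$, so that for every $s\in\Om X\obn$ the powers $s^{\omega+k}$ with $k\in\mathbb N$ depend only on the residue $k\bmod n$. Given $\alpha\in\widehat{\mathbb N}\setminus\mathbb N$, let $r\in\{0,1,\ldots,n-1\}$ be the residue of $\alpha$ modulo~$n$, and choose a sequence $(a_i)_i$ of natural numbers with $a_i\equiv r\pmod n$ and $a_i\to\alpha$ in $\widehat{\mathbb N}$. Continuity of the power operation together with the previous observation then yields, in $\Om X\obn$,
\[
  s^\alpha \;=\; s^\omega\, s^r \;=\; s^{\omega-1}\, s^{r+1},
\]
whose right-hand side is a $\kappa$-term in~$s$. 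A straightforward induction on $\rk{t}$ for a $\um$-term $t$ rewrites every $\alpha$-power occurring in $t$ in this way, giving the desired collapse.

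The ``in particular'' clause corresponds to $n=1$: here $\pv B_1$ is trivial, so $\obn$ specializes to $\pv A$. The only mildly delicate point in the plan is the passage from the finitary periodicity read off the pseudoidentity to the identity $s^\alpha = s^{\omega-1}s^{r+1}$ for an arbitrary infinite profinite exponent~$\alpha$; this is handled by the approximation and continuity argument above, after which the rest reduces to the rewriting $s^\omega = s^{\omega-1}s$ and to an appeal to Theorem~\ref{t:S-1-factorial}.
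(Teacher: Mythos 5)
Your proposal is correct and follows essentially the same route as the paper: reduce to Theorem~\ref{t:S-1-factorial} via the collapse $\omu X\obn=\omc X\obn$, which is obtained by observing that the pseudoidentity $x^{\omega+n}=x^\omega$ forces $s^\alpha=s^{\omega+r}$ for every infinite $\alpha$ with residue $r$ modulo $n$ (the paper phrases this as computing $\Om{\{x\}}\obn$ explicitly, which is the same computation). The only cosmetic difference is that the paper fixes the residue by passing to a subsequence rather than by invoking the continuous extension of reduction mod~$n$.
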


\begin{proof}
  We claim that the equality $\omc X\obn=\omu X\obn$ holds for $|X|=1$
  and so also for every finite alphabet $X$. Therefore, the result
  follows from Theorem~\ref{t:S-1-factorial}. To prove the claim, we
  show that
  $\Om {\{x\}}{\obn}=\{x^k\mid
  k\in\mathbb{N}\}\cup\{x^\omega,x^{\omega+1},\ldots,x^{\omega+(n-1)}\}$.
  For this, let $\alpha$ be a \um-exponent and let $(a_k)_k$ be a
  sequence of integers converging to $\alpha$. One can assume that 
  $a_k$ modulo $n$ is a constant $a$, hence $a_k=nb_k+a$ with
  $b_k\in\mathbb{N}$ for all $k$. In $\Om {\{x\}}\obn$, we then have
  $x^\alpha=x^{\omega+\alpha}=\lim_kx^{\omega+a_k}=\lim_kx^{\omega+nb_k+a}=x^{\omega+a}\in\omc
  X\obn$.
\end{proof}

Another application of Theorem~\ref{t:S-1-factorial} is the following
result, which is a weaker version of one that was established in
\cite[Corollary~8.12]{Almeida&Volkov:2002b}. Although the original
result was formulated for the pseudovariety of all finite semigroups,
the proof applies unchanged to pseudovarieties containing all finite
local semilattices. Related results, under the same hypothesis as the
following corollary, have been obtained by~\cite{Steinberg:2010}.

\begin{Cor}
  \label{c:JA-Volkov}
  If $|X|\geq2$ and \pv V is a nontrivial pseudovariety closed under
  concatenation, then there is no \um-word in the
  minimum ideal of~\Om XV.
\end{Cor}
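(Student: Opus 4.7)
The plan is to argue by contradiction. Suppose some $\um$-word $u=\eval t V$, for a $\um$-term $t$, lies in the minimum ideal $K$ of $\Om XV$. The hypotheses on $\pv V$ (nontrivial and closed under concatenation) imply $\pv V\supseteq\pv A$, so $X^+$ embeds into $\Om XV$ and every language of the form $X^*wX^*\cap X^+$ with $w\in X^+$ is $\pv V$-recognizable (being star-free), and hence has clopen closure in $\Om XV$. I will reach a contradiction by comparing lower and upper bounds on the cardinality of
\[
F_n(t) = \{ w \in X^n : w \text{ is a factor of } \Approx_N(t) \text{ for all sufficiently large } N \}.
\]

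For the lower bound, $K$ being the minimum $\mathcal J$-class implies every $w\in X^+$ is a factor of $u$ in $\Om XV$. The clopenness of the closure of $X^*wX^*\cap X^+$, combined with $\Approx_N(t)\to u$, then forces $w$ to be a factor of $\Approx_N(t)$ for all sufficiently large $N$; that is, every word of length $n$ belongs to $F_n(t)$, so $|F_n(t)|=|X|^n$. Theorem~\ref{t:S-1-factorial} is consistent with this step and could alternatively be invoked to conclude that every factor of $u$ is itself a $\um$-word, so $\Om XV=\omu XV$; the contradiction below then amounts to showing $\omu XV\neq\Om XV$ for $|X|\ge 2$.

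For the upper bound, I would show by induction on $\rks\um t$ that $|F_n(t)|$ is polynomially bounded in $n$. The case $\rks\um t=0$ is immediate since $\Approx_N(t)=t$ is fixed. For the inductive step, decomposing $t=t_0 s_1^{\alpha_1}t_1\cdots s_m^{\alpha_m}t_m$ as in~\eqref{eq:t} and invoking the factorization description from Subsection~\ref{sec:fact-sequ}, any length-$n$ factor of $\Approx_N(t)$ either lies within a single $\Approx_N(t_i)$ or $\Approx_N(s_j)$ (polynomial by induction), or lies within a power $[\Approx_N(s_j)]^{\Approx_N(\alpha_j)}$ while spanning at least one boundary between consecutive copies of $\Approx_N(s_j)$ (contributing $O(n)$ stable factors per $j$, one per suffix-prefix split), or spans a boundary between two consecutive pieces of the outer decomposition (again $O(n)$). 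Summing over the finitely many subterms and boundaries yields the polynomial bound on $|F_n(t)|$.

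For $|X|\ge 2$, having $|X|^n$ polynomially bounded is impossible, yielding the desired contradiction. The main obstacle lies in the inductive step of the upper bound: one must show that the prefixes and suffixes of $\Approx_N(s_j)$ of any fixed finite length stabilize as $N\to\infty$, so that boundary-crossing factors really do contribute only finitely many (indeed $O(n)$) stable factors of each length. This stability is itself proved by induction on rank, using that each $\alpha_j$ is infinite (so $\Approx_N(\alpha_j)\to\infty$) and, recursively, that the approximating words of lower-rank $\um$-subterms have stabilizing prefixes and suffixes.
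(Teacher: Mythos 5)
Your outline is correct, but it takes a genuinely different route from the paper. The paper first invokes Theorem~\ref{t:S-1-factorial} to reduce the statement to showing $\omu XV\neq\Om XV$ for $|X|\ge2$, and then exhibits a single witness: the Prouhet--Thue--Morse limit $\varphi^\omega(x)=\lim\varphi^{n!}(x)$, which is cube-free because each $\varphi^{n!}(x)$ is and the sets $(\Om XV)^1u(\Om XV)^1$ ($u$ a word) are open, whereas every \um-word that is not a word contains arbitrarily large powers of a finite word as factors. Your argument instead bounds the number of ``stable'' length-$n$ factors of the approximants $\Approx_N(t)$ polynomially and plays this against the full complexity $|X|^n$ forced by the minimum ideal; it does not really need factoriality, and it is closer in spirit to the original subword-complexity result of Almeida and Volkov that the paper cites, so it potentially proves more. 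The price is the extra technical work you correctly flag: you must show that the length-$\ell$ prefixes and suffixes of $\Approx_N(s_j)$ stabilize as $N\to\infty$ (provable by induction on rank exactly as you sketch, using that each $\Approx_N(\alpha_j)\to\infty$), and you should also be a little careful with the quantifier in the definition of $F_n(t)$: a word can be a factor of $\Approx_N(t)$ for all large $N$ while occurring at different ``locations'' of the decomposition for different $N$, so the inductive upper bound needs a pigeonhole step (work with ``infinitely many $N$'' and pass to subsequences) before the case analysis applies. With those two points filled in, your proof goes through; the paper's cube-freeness argument avoids both at the cost of proving only the existence of one non-\um-word.
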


\begin{proof}
  Since \pv V is \um-factorial by Theorem~\ref{t:S-1-factorial} and since every element of
  \Om XV is a factor of every element of the minimum ideal, if there were a \um-word in the
  minimum ideal then every element of~\Om XV would be a \um-word. We claim that this is
  impossible under the hypothesis that $|X|\geq2$.

  To prove the claim, observe that by definition, every \um-word of~\Om XV
  which is not a word has at least one infinite power of a finite word as an infix. In
  particular, it admits as factors powers of finite words of arbitrarily large
  exponent. Thus, it suffices to exhibit an element of~\Om XV that fails this condition. For
  this purpose let $x,y\in X$ be distinct letters and consider the Prouhet-Thue-Morse
  substitution, defined by $\varphi(x)=xy$, $\varphi(y)=yx$, and $\varphi(z)=z$ for all $z\in
  X\setminus\{x,y\}$. This extends to a unique continuous endomorphism of~\Om XV, which we
  also denote $\varphi$. Since, as proved by~\cite{Hunter:1983}, the
  monoid of continuous endomorphisms of~\Om XV is profinite
  under the pointwise convergence topology, we may consider the element
  $\varphi^\omega(x)=\lim\varphi^{n!}(x)$. Now, it is well known that each word
  $\varphi^{n!}(x)$ is cube free (see, for instance,
  \cite{Lothaire:1983}). Since \pv V is nontrivial and closed
  under concatenation product, it contains \pv A. Therefore, the sets of the form $(\Om
  XV)^1u(\Om XV)^1$, where $u$~is a word, are open \cite[Theorem~3.6.1]{Almeida:1994a}.
  Hence, $\varphi^\omega(x)$ is also free of cubes of finite words and so
  $\varphi^\omega(x)$~is not a \um-word.
\end{proof}

\section{The Pin-Reutenauer procedure over \pv S for pure signatures}
\label{sec:closures}

Given a pseudovariety \pv V of semigroups, an implicit signature $\sigma$ and a subset $L\subseteq\oms XV$, we denote by \tclsv VL the closure of $L$ in~\oms XV. Both the implicit signature $\sigma$ and the pseudovariety \pv V are understood in this notation. We are interested in computing a representation of such closures in two cases:
\begin{enumerate}[$(a)$]
\item \label{i:prw} when $L$ is of the form $\pj V(K)$ for some rational subset $K$ of $X^+$,
  where \pj V is the natural continuous homomorphism from $\Om XS$ to $\Om XV$;
\item \label{i:prs} when $L$ is a rational subset of $\oms XV$.
\end{enumerate}

Recall that the class of rational subsets of a semigroup $M$ is the smallest family of subsets of
$M$ containing the empty set and the singletons $\{m\}$ for $m\in M$, and closed under union $(Y,Z)\mapsto Y+Z$, product $(Y,Z)\mapsto YZ=\{yz\mid y\in Y,\ z\in Z\}$ and iteration $Y\mapsto Y^+=\bigcup_{k\geq1}Y^k$. Since the homomorphic image of a rational set is rational, any set of the form~\ref{i:prw} is also of the form~\ref{i:prs}. Conversely, there are of course rational sets of \oms XV that are not obtained as image of a rational set of $X^+$ under \pj V, such as the singletons $\{a^\alpha\}$ where $\alpha\in\widehat{\mathbb{N}}$ is a $\sigma$-exponent.

\smallskip

We say that the \emph{Pin-Reutenauer procedure} holds for a
class \Cl C of subsets of~\oms XV if, for every $K,L\in\Cl C$, the
following conditions are satisfied:
\begin{align}
  \label{eq:PR-product}
  \tclsv V{KL} &= \tclsv V{K}\,\tclsv V{L},\\
  \label{eq:PR-plus}
  \tclsv V{L^+} &= \acls{\tclsv VL},
\end{align}
where $\acls U$ denotes the $\sigma$-subalgebra generated by the subset
$U$ of $\oms XV$. Again, in this notation, the fact that closures are taken in $\oms XV$ is understood. 

\smallskip
We say that \pv V is \emph{(weakly) $\sigma$-PR} if, for every finite
alphabet $X$, the Pin-Reutenauer procedure holds for the class
of all subsets of~\oms XV of the form $\pj V(L)$ with $L\subseteq X^+$ a
rational language. We say that \pv V is \emph{strongly
  $\sigma$-PR} if, for every finite alphabet $X$, the Pin-Reutenauer
procedure holds for the class of all rational subsets of~\oms
XV.

In this section, we only deal with the pseudovariety \pv S. In
Section~\ref{sec:PR}, we shall transfer our results from \pv S to other
pseudovarieties. The main result of this section is the following
theorem. It applies only to pure signatures, which we describe below.

\begin{Thm}
  \label{t:main}
  The pseudovariety \pv S is $\sigma$-PR for every pure unary
  signature $\sigma$ containing $\kappa$.
\end{Thm}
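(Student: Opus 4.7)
\medskip
\noindent\textbf{Proof plan.} The theorem asserts two equalities for rational $K,L\subseteq X^+$: $\tclsv S{KL}=\tclsv S{K}\,\tclsv S{L}$ and $\tclsv S{L^+}=\acls{\tclsv S{L}}$. The inclusions $\tclsv S{K}\,\tclsv S{L}\subseteq\tclsv S{KL}$ and $\acls{\tclsv S{L}}\subseteq\tclsv S{L^+}$ are the easy ones: continuity of multiplication in $\Om XS$ handles the first; for the second, each operation $\_\vphantom{|}^\alpha$ with $\alpha\in\widehat{\mathbb{N}}$ is continuous in $\Om XS$ (being a limit of integer power maps), so the closure of the subsemigroup $L^+$ in $\Om XS$ is automatically a $\sigma$-subalgebra, whose trace $\tclsv S{L^+}$ on $\oms XS$ is a $\sigma$-subalgebra containing $\tclsv S{L}$, and therefore contains $\acls{\tclsv S{L}}$. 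The plan thus focuses on the two reverse inclusions.

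For the product equality, I would fix $u\in\tclsv S{KL}$ and a $\sigma$-term $t$ with $\eval t S=u$. Since $KL$ is rational, its syntactic morphism $\varphi\colon X^+\to M$ into a finite semigroup $M$ extends by continuity to $\Om XS\to M$; as $M$ is discrete and $\Approx_n(t)\to u$, we obtain $\varphi(\Approx_n(t))=\varphi(u)$, and hence $\Approx_n(t)\in KL$, for all sufficiently large $n$. Choosing factorizations $\Approx_n(t)=k_nl_n$ with $k_n\in K$ and $l_n\in L$ yields a factorization sequence for $t$, so Lemma~\ref{lem:converging-factorization-sequences} supplies subsequences along which $k_n\to v$ and $l_n\to w$ in $\Om XS$ for some \um-words $v,w$ satisfying $vw=u$. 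Plainly $v\in\tcls K$ and $w\in\tcls L$; the purity hypothesis on $\sigma$ is invoked here to promote these \um-words to $\sigma$-words, placing them in $\tclsv S{K}$ and $\tclsv S{L}$ respectively.

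For the iteration equality, take $u\in\tclsv S{L^+}$ and a representing $\sigma$-term $t$; the same syntactic-morphism argument yields factorizations $\Approx_n(t)=l_{n,1}\cdots l_{n,r_n}$ with $l_{n,j}\in L$, for all large $n$. The main obstacle is that $r_n$ may be unbounded, so direct coordinate-wise compactness fails. My plan is to proceed by induction on $\rk t$, exploiting the factorization-history machinery of Section~\ref{sec:fact-sequ}: simplified histories of $\Approx_n(t)$ have length at most $\rk t+1$, which organises the $L$-blocks into bounded-length chunks indexed by positions in $t$. At each subterm of the form $s^\alpha$, the long power $s^{\Approx_n(\alpha)}$ is analysed by a pigeonhole argument in the transition monoid of a DFA for $L$: up to a subsequence, it splits into a bounded prefix, an iterated middle realising an idempotent, and a bounded suffix, the middle converging in the profinite limit to a $\sigma$-power of an element of $\tclsv S{L}$ provided the exponent produced by the pigeonhole step is itself a $\sigma$-exponent---this is exactly where purity is used. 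The inductive hypothesis treats the boundary pieces and the inner term $s$, and reassembly along the decomposition of $t$ then expresses $u$ as a $\sigma$-polynomial in finitely many elements of $\tclsv S{L}$, placing it in $\acls{\tclsv S{L}}$. I expect this alignment between the $L$-block decomposition of $\Approx_n(t)$ and the nested $\sigma$-structure of $t$, together with the invocation of purity to realise the required exponents, to be the crux of the argument.
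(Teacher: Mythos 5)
Your overall architecture (easy inclusions by continuity; hard inclusions by approximating a representing $\sigma$-term $t$ by the words $\Approx_n(t)$, which eventually fall in the clopen sets $\tcl{KL}$, $\tcl{L^+}$, and then passing to limits of factorization sequences) matches the paper's. But there is a genuine gap at the step where you pass from \um-words back to $\sigma$-words. Lemma~\ref{lem:converging-factorization-sequences} only produces limits $v,w$ that are \um-words, and being a \um-word is far weaker than being a $\sigma$-word: for $\sigma=\kappa$, the prefix $x^\alpha$ of $x^\omega$ is a \um-word for every $\alpha\in\widehat{\mathbb{N}}$ but is a $\kappa$-word only for very special $\alpha$ (this is exactly why \pv S fails to be $\kappa$-factorial). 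Purity does not repair this: it is a statement about dividing $\bar\sigma$-exponents by positive integers, and it says nothing about an arbitrary limit exponent being a $\sigma$-exponent. Indeed, purity is not even a hypothesis of the concatenation half (Theorem~\ref{t:half-PR-for-S-1} holds for every unary $\sigma\supseteq\kappa$). The missing ingredient is the rebalancing step (Lemma~\ref{l:local-adjustment}): along a filtered subsequence, each pair of exponents $(k_{i,r},\ell_{i,r})$ created by cutting a power $s_j^{\Approx_n(\alpha_j)}$ satisfies $k_{i,r}+\ell_{i,r}+1\to\gamma_i$ with $\gamma_i$ a $\bar\sigma$-exponent, so one may \emph{replace} the (arbitrary) limit exponents by $\bar\sigma$-exponents $\alpha_i,\beta_i$ with $\alpha_i+\beta_i+1=\gamma_i$ (using completeness of $\bar\sigma$ and $\kappa\subseteq\sigma$), without changing the product $\eval tS$ nor the images under a morphism recognizing $K$ and $L$. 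Only then does Remark~\ref{rem:inclusive} place the factors in $\tclsv SK$ and $\tclsv SL$. Without this lemma your concatenation argument does not close, and the same defect recurs at every limit-taking step of your iteration argument.

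For the iteration half your sketch is in the right spirit --- the paper's Proposition~\ref{prop:nu=1} does analyse how $L$-blocks cover the long power $s_1^{\Approx_k(\alpha_1)}$ via states of a recognizer (a weighted multigraph on pairs of elements of the syntactic monoid), and purity enters exactly where you say: a cycle of total weight $n$ traversed $\ell_k\to\beta$ times forces $n\beta=\alpha_1-n_0-n_1$, and purity yields that $\beta$ is a $\bar\sigma$-exponent. However, the reduction is more delicate than ``induction on $\rk t$'': the paper inducts on the pair $(\rk t,\nu_\sigma(t))$, first reducing to $\nu(t)=1$ by a case analysis on where the $L$-blocks cut $\Approx_k(t)$ (Proposition~\ref{prop:reduction-nu=1}), and the graph argument requires normalizing the paths by Euler-type transformations before the pigeonhole/cycle analysis applies. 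You should make the rebalancing lemma explicit and restructure the induction accordingly.
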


The additional \emph{purity} property that $\sigma$ is required to
possess is the following.
\begin{definition*}
  A unary signature $\sigma$ is said to be \emph{pure} if, for every positive integer $d$
  and for all $\alpha\in\widehat{\mathbb{N}}$, if $d\alpha$ is a $\bar\sigma$-exponent, then
  $\alpha$ is also a $\bar\sigma$-exponent.
\end{definition*}
Note that the quotient of $d\alpha$ by $d$ is actually uniquely determined: if
$\alpha,\beta\in\widehat{\mathbb{N}}$ and $d\in\mathbb{N}\setminus\{0\}$ are such that
$d\alpha=d\beta$, then $\alpha=\beta$. This follows immediately from the
fact that the free profinite group on one generator, which is
isomorphic to $\widehat{\mathbb{N}}\setminus\mathbb{N}$, is torsion-free.
Let us show this fact directly: $d\alpha=d\beta$ means that all finite
semigroups satisfy $x^{d\alpha}=x^{d\beta}$. To show that all finite semigroups also satisfy
$x^{\alpha}=x^{\beta}$, it is sufficient to consider 1-generated semigroups. Such
semigroups have presentations of the form $S_{m,p}=\langle a:a^m=a^{m+p} \rangle$, for
integers $m,p\geq0$. Note that the semigroup homomorphism $\varphi:S_{m,p}\to S_{dm,dp}$ mapping
$a$ to $a^d$ is injective. Since $S_{dm,dp}$ satisfies $x^{d\alpha}=x^{d\beta}$, we have in
$S_{dm,dp}$ the equalities $\varphi(a^\alpha)=a^{d\alpha}=a^{d\beta}=\varphi(a^\beta)$,
whence $S_{m,p}$ satisfies $x^\alpha=x^\beta$. This proves that~$\alpha=\beta$.

\medskip
In view of the following lemma, Theorem~\ref{t:main} can be applied to the signature~$\kappa$.
\begin{Lemma}
  \label{l:kappa-pure}
  The unary signature $\kappa$ is pure.
\end{Lemma}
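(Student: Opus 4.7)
The plan is to first pin down explicitly the set of $\bar\kappa$-exponents. Since $\bar\kappa$ is the smallest complete unary signature containing $\kappa$ and $\kappa$ has the single exponent $\omega-1$, the $\bar\kappa$-exponents are exactly the elements obtained from $\omega-1$ by iterated application of $\alpha\mapsto\alpha\pm 1$, namely the set $\{\omega+k:k\in\mathbb{Z}\}$. Each such element is infinite, since $\omega$ is an idempotent of $\widehat{\mathbb{N}}$, so adding any finite integer cannot produce an element of $\mathbb{N}$.

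Next, I would assume that $d$ is a positive integer and that $d\alpha$ is a $\bar\kappa$-exponent, so $d\alpha=\omega+k$ for some $k\in\mathbb{Z}$. Observe first that $\alpha$ must be infinite: otherwise $d\alpha\in\mathbb{N}$, contradicting $\omega+k\notin\mathbb{N}$. The heart of the argument is then to show that $d\mid k$ in $\mathbb{Z}$. For this, I would invoke the unique continuous homomorphism $\pi:\widehat{\mathbb{N}}\to\mathbb{Z}/d\mathbb{Z}$ extending the canonical projection from $\mathbb{N}$. On the one hand, $\pi(d\alpha)=0$. On the other hand, $\pi(\omega)=0$ because $\omega=\lim_n n!$ and $n!\equiv 0\pmod{d}$ for $n\geq d$, whence $\pi(\omega+k)=k\bmod d$. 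Comparing yields $k\equiv 0\pmod{d}$, so $k=dk'$ for some $k'\in\mathbb{Z}$.

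To finish, using the idempotency of $\omega$ (hence $d\omega=\omega$), one rewrites $d\alpha=\omega+dk'=d(\omega+k')$. The injectivity of multiplication by $d$ on $\widehat{\mathbb{N}}$ established in the paragraph immediately preceding the lemma then forces $\alpha=\omega+k'$, which is a $\bar\kappa$-exponent. The main obstacle is the divisibility step $d\mid k$, which has no pointwise analogue if one tried to work with $\kappa$-exponents directly; this is precisely why purity is formulated in terms of $\bar\sigma$ rather than $\sigma$, and why the auxiliary quotient to $\mathbb{Z}/d\mathbb{Z}$ is essential.
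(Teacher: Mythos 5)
Your proof is correct and follows essentially the same route as the paper's: identify the $\bar\kappa$-exponents as $\{\omega+k:k\in\mathbb{Z}\}$, pass to the unique continuous homomorphism onto $\mathbb{Z}/d\mathbb{Z}$ to deduce $d\mid k$, and conclude $\alpha=\omega+k/d$. The only difference is that you spell out the last step (idempotency of $\omega$ plus injectivity of multiplication by $d$), which the paper leaves implicit after the words ``whence $\alpha=\omega+\frac nd$.''
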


\begin{proof}
  Every $\bar\kappa$-exponent is of the form $\omega+n$, where $n\in\mathbb{Z}$. Therefore,
  it suffices to show that, if $n$ is an integer, $d$ is a positive
  integer, and $\alpha\in\widehat{\mathbb{N}}$ is such that
  $\omega+n=d\alpha$, then $d$ divides $n$, whence
  $\alpha=\omega+\frac nd$ is again a $\bar\kappa$-exponent. For that purpose, consider the unique continuous
  homomorphism of additive monoids
  $\varphi:\widehat{\mathbb{N}}\to\mathbb{Z}/d\mathbb{Z}$ which maps
  $1$ to~$1$. We have $\varphi(\omega)=\varphi(\lim_kk!)=0$ and $\varphi(d\alpha)=d\varphi(\alpha)=0$, and we deduce from the equality $\omega+n=d\alpha$ that
  $\varphi(n)=0$. 
\end{proof}

To establish Theorem~\ref{t:main}, we first prove a technical key lemma in Section~\ref{sec:key-lemma}. We shall then
consider separately the cases of concatenation and iteration respectively in
Sections~\ref{sec:concatenation} and~\ref{sec:iteration}.

\subsection{A key lemma}
\label{sec:key-lemma}

We first prove a technical result which will be the key lemma in the sequel. It shows that,
under suitable hypotheses, one can balance the factors of a factorization of a given
$\bar\sigma$-term to make them $\bar\sigma$-terms themselves, without affecting membership in
given clopen sets. For $L\subseteq X^+$, we denote by \tcl L the topological closure of $L$ in~\Om
XS.

Given \um-terms $t_1,\ldots,t_m$ and languages~$L_1,\ldots,L_m$, we say
that $(t_1,\ldots,t_m)$ is a \emph{$(L_1,\ldots,L_m)$-splitting} of
a \um-term $t$ if the following
conditions hold:
\begin{enumerate}[$(i)$]
\item\label{item:local-adjustment-1} 
  $t=t_1\cdots t_m$;
\item\label{item:local-adjustment-2} 
  $\eval {t_i}S\in\tcl {L_i}$ for every $i=1,\ldots,m$.
\end{enumerate}
Given a $\sigma$-term $t$, let  $\lambda_\sigma(t)=(\rks\sigma
t,\nu_\sigma(t))$. We may write $\lambda$ instead of~$\lambda_\sigma$ when
$\sigma$ is clear from the context.

\begin{Lemma}
  \label{l:local-adjustment}
  Let $\sigma$ be a unary signature containing~$\kappa$, let $t$ be a $\bar
  \sigma$-term, and let $L_1,\ldots,L_m$ be rational languages. If $t$
  admits an $(L_1,\ldots,L_m)$-splitting $(t_1,\ldots,t_m)$, then there
  exists a $\bar
  \sigma$-term $z$  admitting an $(L_1,\ldots,L_m)$-splitting $(z_1,\ldots,z_m)$ such
  that:
  \begin{enumerate}[$(1)$]
  \item\label{i1} $\eval z S=\eval t S$,
  \item\label{i2} $z_i$ is a $\bar\sigma$-term  for $i=1,\ldots,m$, 
  \item\label{i3} $\lambda_{\bar\sigma}(z_i)=\lambda_\um(t_i)$ for  $i=1,\ldots,m$.
  \end{enumerate}
\end{Lemma}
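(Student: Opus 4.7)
The plan is to proceed by induction on the lexicographic pair $\lambda_{\bar\sigma}(t)=(\rks{\bar\sigma}{t},\nu_{\bar\sigma}(t))$. The base case $\rks{\bar\sigma}{t}=0$ is immediate: then $t$ is a finite word in $X^+$, every factor $\eval{t_i}S$ is itself a word $w_i$, and setting $z_i=w_i$ satisfies (1)--(3) with $\lambda_{\bar\sigma}(z_i)=(0,0)=\lambda_\um(t_i)$.

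For the inductive step, write $t=u_0 s_1^{\alpha_1}u_1\cdots s_p^{\alpha_p}u_p$ as in~\eqref{eq:t}, with each $\alpha_j$ a $\bar\sigma$-exponent (hence infinite) and each $u_i,s_j$ of strictly smaller $\bar\sigma$-rank. Since \pv S is closed under concatenation, multiplication in $\Om XS$ is open, and the factorization $\eval t S=\eval{t_1}S\cdots\eval{t_m}S$ therefore lifts, for all large $n$, to a factorization sequence $\Approx_n(t)=v_1^{(n)}\cdots v_m^{(n)}$ with $v_i^{(n)}\to\eval{t_i}S$; as each $L_i$ is rational, $\tcl{L_i}$ is clopen and $v_i^{(n)}\in L_i$ holds for large $n$. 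By extracting a filtered subsequence I assume the simplified histories are constant and all relevant exponent vectors converge in some $\widehat{\mathbb N}^d\setminus\mathbb N^d$. I also fix one continuous homomorphism $\rho\colon\Om XS\to F$ onto a finite semigroup that simultaneously recognises every $L_i$, so that $\tcl{L_i}$ is a union of $\rho$-classes.

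Each cut is classified by the first letter of its common simplified history. A cut of type $(1,j)$ lies inside $u_j$; gathering every such cut produces an induced splitting of $u_j$ by rational sub-languages to which the induction hypothesis applies (since $\rks{\bar\sigma}{u_j}<\rks{\bar\sigma}{t}$), giving $\bar\sigma$-term replacements. For the $r_j$ cuts of type $(2,j)$ falling inside $s_j^{\alpha_j}$, I rewrite
\[
s_j^{\alpha_j}\;=\;s_j^{\omega+c_1}\cdot s_j^{\omega+c_2}\cdots s_j^{\omega+c_{r_j}}\cdot s_j^{\alpha_j+c_0},
\]
which is valid in $\Om XS$ as soon as $c_0+c_1+\cdots+c_{r_j}=0$ in $\mathbb Z$ (using $\omega+\omega=\omega$ and $\omega+\alpha_j=\alpha_j$). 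Each exponent $\omega+c_i$ or $\alpha_j+c_0$ is a $\bar\sigma$-exponent, since $\bar\sigma$ is closed under $\alpha\mapsto\alpha\pm 1$ (inherited from $\kappa\subseteq\sigma$). I then choose the integers $c_i$ so that the $\rho$-class of each new piece matches that of the corresponding old piece $s_j^{b_i}$, guaranteeing $\eval{z_i}S\in\tcl{L_i}$; each $c_i$ is only constrained modulo the period of $\rho(s_j)$. Any cut lying inside a split copy of $s_j$ is finally handled by a recursive call on $s_j$, which has strictly smaller $\bar\sigma$-rank.

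The main obstacle is the arithmetic reconciliation in the $(2,j)$ case: the residue conditions on the $c_i$'s and the exact equation $\sum c_i=0$ must be jointly satisfiable in $\mathbb Z$. The congruence $\sum c_i\equiv 0$ modulo the period of $\rho(s_j)$ is forced by applying $\rho$ to the identity $b_0+\cdots+b_{r_j}=\alpha_j$ relating the exponents of the old pieces, and the freedom to add multiples of the period to individual $c_i$'s then yields an exact integer solution. Assembling all the replacements produces $z=z_1\cdots z_m$ with $\eval z S=\eval t S$; powers map to powers and each $z_i$ keeps the top-level power structure of $t_i$, so $\lambda_{\bar\sigma}(z_i)=\lambda_\um(t_i)$, completing the induction.
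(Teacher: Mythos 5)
Your overall strategy (adjust the split exponents so that all pieces become $\bar\sigma$-terms, using a recognizing homomorphism to preserve membership in the languages and an identity on exponents to preserve the evaluation) is the right idea and matches the spirit of the paper's proof. However, there is a genuine gap in the treatment of $(2,j)$-cuts which breaks both condition~\ref{i3} and the membership condition.

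You rewrite $s_j^{\alpha_j}$ as a product $s_j^{\omega+c_1}\cdots s_j^{\omega+c_{r_j}}\cdot s_j^{\alpha_j+c_0}$ in which \emph{every} piece carries an infinite exponent. But in the given $\um$-splitting $t=t_1\cdots t_m$, a piece $t_i$ that lands inside $s_j^{\alpha_j}$ may carry a \emph{finite} power of $s_j$. The simplest instance: $t=a^\omega$ with $t_1=a^3$ and $t_2=a^{\omega-3}$ (here $t_1=aaa$ is a word with $\lambda_\um(t_1)=(0,0)$, and $t_2$ has $\lambda_\um(t_2)=(1,1)$). Your construction returns $z_1=a^{\omega+c_1}$, $z_2=a^{\omega+c_0}$, and thus $\lambda_{\bar\sigma}(z_1)=(1,1)\neq(0,0)=\lambda_\um(t_1)$, violating~\ref{i3}. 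This matters: property~\ref{i3} is what makes the inductive descent work in Proposition~\ref{prop:reduction-nu=1}. Moreover, your claim that the $c_i$'s can be chosen so that $\rho(z_i)$ matches $\rho$ applied to the old piece also fails in this situation, because $\rho(a^3)=\rho(a)^3$ need not lie in the maximal subgroup $\{\rho(a)^{\omega+k}\}$, whereas $\rho(a^{\omega+c})$ always does; so no choice of integer $c$ can reproduce the correct $\rho$-class of a piece with small finite exponent.

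The paper's proof avoids this precisely by distinguishing three cases according to whether the left-exponent sequence $(k_{i,r})_r$ and right-exponent sequence $(\ell_{i,r})_r$ along the filtered factorization sequence are constant or unbounded: constant exponents are kept finite (preserving both $\lambda$ and the image under $\varphi$), and only unbounded ones are replaced by $\bar\sigma$-exponents of the form $\gamma_i-\ell_i-1$ or $\omega+\ell_i$ subject to $\alpha_i+\beta_i+1=\gamma_i$. You need to incorporate this case distinction; once you do, most of the rest of your argument can be salvaged. As a smaller point, the appeal to openness of multiplication to ``lift'' the factorization is superfluous: since $t=t_1\cdots t_m$ holds as an identity of $\um$-terms, one has $\Approx_n(t)=\Approx_n(t_1)\cdots\Approx_n(t_m)$ directly, which is what the paper uses; and the ``rational sub-languages'' to which you apply the inductive hypothesis on $u_j$ should be spelled out (they are $\varphi^{-1}$-preimages of singletons, once a single recognizing $\varphi$ has been fixed).
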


\begin{proof}
  We only prove the statement for $m=2$, since it is representative of
  the general case, and it allows a simplified notation.

  Let $(t_1,t_2)$ be an $(L_1,L_2)$-splitting of $t$. Set
  $x_n=\Approx_n(t_1)$ and $y_n=\Approx_n(t_2)$. 
  By~\ref{item:local-adjustment-2}, $\lim x_n=\eval {t_1}S$
  belongs to \tcl {L_1}, which is open by rationality of~$L_1$
  \cite[Thm.~3.6.1]{Almeida:1994a}. Therefore, the word $x_n$~belongs
  to~$L_1$ for all sufficiently large~$n$. Similarly, the word $y_n$
  belongs to $L_2$ for all sufficiently large $n$.  Let $\varphi:X^+\to
  S$ be a homomorphism into a finite semigroup recognizing both
  languages $L_1$
  and~$L_2$.      
  In view of~\ref{item:local-adjustment-1}, $(x_n,y_n)_n$ is a
  factorization sequence for~$t$. Let $(x_{n_r},y_{n_r})_r$ be a
  filtered subsequence of $(x_n,y_n)_n$, and let
  $(k_{1,r},\ell_{1,r},\ldots,k_{d,r},\ell_{d,r})_r$ be the sequence of
  exponent vectors for the factorization
  $\Approx_{n_r}(t)=x_{n_r}y_{n_r}$.  When $(k_{i,r})_r$ (resp.\
  $(\ell_{i,r})_r$) is constant, let $k_i$ (resp.\ $\ell_i$) be this
  constant value. Otherwise, by taking a subsequence, we may 
  assume that for each $s\in S$, each of the sequences $(s^{k_{i,r}})_r$
  and $(s^{\ell_{i,r}})_r$ is constant, say with value respectively
  $s^{k_i}$ and $s^{\ell_i}$ ($i=1,\ldots,d$), the integers $k_i$ and
  $\ell_i$ being independent of the element $s\in S$.

  In view of Case~\eqref{item:factorization-2} of the definition of
  factorization sequence and since $t$ is a $\bar\sigma$-term, each
  sequence $(k_{i,r}+\ell_{i,r}+1)_r$ converges to some
  $\bar\sigma$-exponent~$\gamma_i$. In particular, $\gamma_i$ is infinite. Define $(\alpha_i,\beta_i)$ by
  \begin{equation*}
    (\alpha_i,\beta_i) =
    \begin{cases}
      (k_i,\gamma_i-k_i-1)&\text{if $(k_{i,r})_r$ is constant and
        $(\ell_{i,r})_r$ unbounded,}\\
      (\gamma_i-\ell_i-1,\ell_i)&\text{if $(k_{i,r})_r$ is unbounded and
        $(\ell_{i,r})_r$ constant,}\\
      (\gamma_i-\ell_i-1,\omega+\ell_i)\!\!&\text{if both $(k_{i,r})_r$ and
        $(\ell_{i,r})_r$ are unbounded.}
    \end{cases}
  \end{equation*}
  Note that in all cases, we have
  \begin{equation}
    \label{eq:invariant}
    \alpha_i+\beta_i+1=\gamma_i.
  \end{equation}
  Let $z_1$ (resp.\ $z_2$) be the \um-term obtained from $x_{n_r}$ (resp.\ from $y_{n_r}$) by
  replacing for every $i$ the exponent $k_{i,r}$ by $\alpha_i$ and the exponent $\ell_{i,r}$
  by $\beta_i$.  Set $$z=z_1z_2,$$ and let us verify that $z_1,z_2$ and $z$ fulfill the
  desired properties. We have to show properties~\ref{i1}--\ref{i3}, and that $(z_1,z_2)$ is
  an $(L_1,L_2)$-splitting of $z$.

  \smallskip First note that, by~\eqref{eq:invariant}, we have
  $\eval{y^{\alpha_i+\beta_i+1}}S=\eval{y^{\gamma_i}}S$ for all \um-term $y$.  Since
  $(k_{i,r}+\ell_{i,r}+1)_r$ converges to $\gamma_i$, using~\eqref{item:factorization-2} we
  deduce that $\eval z S=\eval t S$, which proves~\ref{i1}.  Next, by definition
  $\alpha_i$ and $\beta_i$ either belong to $\mathbb{N}$, or are of one of the forms
  $\gamma_i-n$ or $\omega+n$ where $n\in\mathbb{N}$.  Since $\gamma_i$ is a
  $\bar\sigma$-exponent and since $\sigma$ contains $\kappa$, both $\alpha_i$ and $\beta_i$
  are $\bar\sigma$-exponents, whence both $z_1$, $z_2$ are $\bar\sigma$-terms, which
  proves~\ref{i2}. Finally, we have $\lambda_{\bar\sigma}(z_i)=\lambda_\um (t_i)$ by
  construction, which is~\ref{i3}.

  It remains to verify that $(z_1,z_2)$ is an $(L_1,L_2)$-splitting of
  $z$. Condition~\ref{item:local-adjustment-1} is satisfied by definition of $z$. Let us verify that
  $z_1\in \tcl{L_1}$ (showing that $z_2\in\tcl {L_2}$ is similar). Let $\hat\varphi:\Om XS\to S$ be the
  continuous extension of $\varphi$ to $\Om XS$. 
  By~\ref{item:local-adjustment-2} applied to the
$(L_1,L_2)$-splitting $(t_1,t_2)$ of $t$, we have  $t_1\in\tcl{L_1}=\hat\varphi^{-1}(\varphi(L_1))$. Since $t_1$ is the
  limit of $(x_{n_r})_r$, it suffices to show that
  $\hat\varphi(z_1)=\varphi(x_{n_r})$. This follows
  from the claim that for $r$ large enough,
  $s^{\Approx_{n_r}(\alpha_i)}=s^{k_i}=s^{k_{i_r}}$, which is
  clear if $(k_{i_r})_r$ is constant, while it is obtained by reasoning in the
  group $\{s^{\omega+p}\mid p\ge0\}$ if $(k_{i_r})_r$ is unbounded.
\end{proof}

\subsection{The case of concatenation}
\label{sec:concatenation}
We are now ready to treat the case of concatenation, that is, to establish
Property~\eqref{eq:PR-product}.

\begin{Thm}
  \label{t:half-PR-for-S-1}
  Equality~\eqref{eq:PR-product} holds over the pseudovariety \pv S for every unary signature
  $\sigma$ containing $\kappa$ and for all rational languages $K,L\subseteq X^+$.
\end{Thm}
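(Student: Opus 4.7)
\emph{Easy direction.} I would first dispatch $\tclsv SK\cdot\tclsv SL\subseteq\tclsv S{KL}$ directly: given $v\in\tclsv SK$ and $w\in\tclsv SL$, take sequences $v_n\in K$ with $v_n\to v$ and $w_n\in L$ with $w_n\to w$; continuity of multiplication in $\Om XS$ yields $v_nw_n\in KL$ converging to $vw$, and $vw\in\oms XS$ since $\oms XS$ is closed under multiplication, so $vw\in\tclsv S{KL}$.

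\emph{Hard direction.} For $u\in\tclsv S{KL}$, I would proceed in four steps. First, I would fix a $\bar\sigma$-term~$t$ with $\eval tS=u$, which exists by Remark~\ref{rem:inclusive}. Second, writing $u$ as a limit of a sequence $a_nb_n\to u$ with $a_n\in K$ and $b_n\in L$, I would use compactness of $\Om XS$ to extract $v\in\tcl K$ and $w\in\tcl L$ with $u=vw$. Third, since $\pv S$ is closed under concatenation, multiplication in $\Om XS$ is open, so the factorization $u=vw$ lifts through the convergence $\Approx_n(t)\to u$ to yield factorizations $\Approx_n(t)=v_nw_n$ with $v_n\to v$ and $w_n\to w$; because $\tcl K$ and $\tcl L$ are clopen and meet $X^+$ exactly in $K$ and $L$, both $v_n\in K$ and $w_n\in L$ hold for all sufficiently large~$n$. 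Fourth, I would invoke Lemma~\ref{l:local-adjustment} on this data (with $L_1=K$, $L_2=L$): extracting a filtered subsequence of $(v_n,w_n)_n$ yields a constant simplified history and exponent vectors converging in $\widehat{\mathbb N}^d$, and the exponents of~$t$ at the powers crossed by the cut are $\bar\sigma$-exponents because $t$ itself is a $\bar\sigma$-term. The balancing construction of the lemma then delivers $\bar\sigma$-terms $z_1,z_2$ with $\eval{z_1z_2}S=\eval tS=u$ and $\eval{z_i}S\in\tcl{L_i}$. Because $\eval{z_i}S$ is a $\bar\sigma$-word, it lies in $\omup{\bar\sigma}XS=\oms XS$ (Remark~\ref{rem:inclusive}) and hence in $\tcl{L_i}\cap\oms XS=\tclsv S{L_i}$, so $u=\eval{z_1}S\cdot\eval{z_2}S\in\tclsv SK\cdot\tclsv SL$.

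\emph{Expected obstacle.} The delicate point is step four: Lemma~\ref{l:local-adjustment} is phrased in terms of a term-level $(K,L)$-splitting of~$t$, whereas our pair $(v_n,w_n)$ comes from a word-level factorization of~$\Approx_n(t)$ obtained by openness and in general does not arise from any term-level split of~$t$. I would reconcile this either by observing that the construction inside the proof of Lemma~\ref{l:local-adjustment} depends only on having a factorization sequence of~$t$ with limits in the prescribed closures together with the $\bar\sigma$-character of the exponents of~$t$ (so the construction applies verbatim to $(v_n,w_n)$), or, more explicitly, by rewriting the crossed power $s_j^{\alpha_j}$ of~$t$ as $s_j^{\alpha}\cdot s_j^{\beta}$ with $\alpha+\beta+1=\alpha_j$ and $\alpha,\beta\in\bar\sigma$—permissible because $\bar\sigma$ is closed under $\pm1$ and contains~$\alpha_j$—to exhibit a genuine term-level splitting matching the cut, to which the lemma then applies as stated.
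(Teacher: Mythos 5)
Your proposal is correct and follows essentially the same route as the paper: reduce to a filtered factorization sequence $\Approx_n(t)=v_nw_n$ with $v_n\in K$, $w_n\in L$, and invoke Lemma~\ref{l:local-adjustment}. Two remarks: the paper obtains this factorization sequence more directly (since $\tcl{KL}$ is clopen, $\Approx_n(t)\in KL$ for all large $n$, so no appeal to compactness or to openness of multiplication is needed), and your ``expected obstacle'' is resolved precisely by your first option --- the paper substitutes the limits of the exponent vectors of the filtered subsequence into the factorization to obtain genuine \um-terms $t_1,t_2$ forming a $(K,L)$-splitting of $t$ (splittings are defined for \um-terms, so no $\bar\sigma$-balancing is required at the input stage); by contrast, your second option, an arbitrary $\bar\sigma$-balanced split of $s_j^{\alpha_j}$, would not in general place the two halves in $\tcl K$ and $\tcl L$.
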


\begin{proof}
  The inclusion from right to left in~\eqref{eq:PR-product}
  amounts to continuity of multiplication in~\oms XS and thus it is
  always valid. For the direct inclusion, let $v$ be an arbitrary
  element of~$\tcls{KL}$. We need to show that $v$ belongs to~$\tcls
  K\cdot\tcls L$.

  Choose a $\sigma$-term $t$ such that $\eval t S=v$. Since $v\in\tcl{KL}$ and since the
  closure $\tcl{KL}$ of the rational language~$KL$ is
  clopen~\cite[Thm.~3.6.1]{Almeida:1994a}, the word $\Approx_n(t)$~belongs to~$KL$ for all
  sufficiently large $n$. For such~$n$, let $t_{1,n}\in K$ and $t_{2,n}\in L$ be words such
  that $\Approx_n(t)=t_{1,n}t_{2,n}$, and let $(t_{1,n_r},t_{2,n_r})_r$ be a filtered
  subsequence of $(t_{1,n},t_{2,n})_n$. For $i=1,2$, let $t_i$ be the term obtained by
  substituting each exponent vector with the limit of the sequence of exponent vectors, in
  $\widehat{\mathbb{N}}^d$, so that $\lim t_{i,n_r}=\eval{t_i}S$, and $(t_1,t_2)$ is a
  $(K,L)$-splitting of $t$.  By Lemma~\ref{l:local-adjustment}, it follows that there exists
  a $\bar\sigma$-term $z$ such that $v=\eval zS$ and $z$ admits a $(K,L)$-splitting $(z_1,
  z_2)$ into $\bar\sigma$-terms. Since the unary signature $\sigma$ contains $\kappa$, we
  have $\omup{\bar\sigma} XS=\oms XS$ by Remark~\ref{rem:inclusive}. Hence, $\eval
  {z_1}S\in\tcl{K}\cap\omup{\bar\sigma} XS=\tcl{K}\cap\oms XS=\tcls K$, and similarly $\eval
  {z_2}S\in\tcls L$.  Finally, $v=\eval zS=\eval {z_1z_2}S=\eval {z_1}S\cdot \eval {z_2}
  S\in\tcls K\cdot\tcls L$.
\end{proof}

\subsection{The case of iteration}
\label{sec:iteration}

We now show that~\eqref{eq:PR-plus} holds over the pseudovariety $\pv S$, for every pure implicit
signature $\sigma$ containing $\kappa$ and every rational language $L\subseteq X^+$. It is easy to
see that the inclusion from right to left in~\eqref{eq:PR-plus} always
holds, see~\cite{ACZ:Closures:14}.\label{easy-inclusion-thm-3.5} The rest of this subsection is devoted to the proof of the other
inclusion.
\begin{Thm}
  \label{t:half-PR-for-S-2}

  Equality~\eqref{eq:PR-plus} holds over the pseudovariety \pv S for every pure unary signature
  $\sigma$ containing $\kappa$ and for every rational language $L\subseteq X^+$.
\end{Thm}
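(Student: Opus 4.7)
The inclusion $\acls{\tcls L} \subseteq \tcls{L^+}$ is the easy direction noted above. For the converse, I would mimic the argument of Theorem~\ref{t:half-PR-for-S-1}. Fix $v \in \tcls{L^+}$ and a $\sigma$-term $t$ with $\eval{t}{S} = v$. Since $\tcl{L^+}$ is clopen in $\Om X S$ by \cite[Theorem~3.6.1]{Almeida:1994a}, for all sufficiently large $n$ we have $\Approx_n(t) \in L^+$, and I fix factorizations $\Approx_n(t) = u_{1,n} \cdots u_{m_n, n}$ with every $u_{i,n} \in L$. The essential difference with the concatenation case is that $m_n$ may grow unboundedly with $n$, so Lemma~\ref{l:local-adjustment} cannot be applied directly to the $m_n$-fold $L$-splitting of $\Approx_n(t)$. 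The plan is to aggregate these $L$-factors into a bounded number of blocks and then apply the lemma to the coarser splitting.

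To perform the aggregation I would fix a homomorphism $\eta : X^+ \to T$ onto a finite semigroup recognizing $L$ and let $p$ be an idempotent exponent for $T$. By the case analysis of Section~\ref{sec:fact-sequ}, each break point between $u_{i,n}$ and $u_{i+1,n}$ has a simplified history of length at most $\rk t + 1$, drawn from a finite set, and points to a specific subterm of $t$. Passing to a filtered subsequence and using pigeonhole on the $\eta$-images of consecutive $L$-factors, I would arrange that all break points falling strictly inside a given infinite power $s_j^{\alpha_j}$ come in consecutive packets of $p$ whose joint $\eta$-image is a fixed idempotent of $T$. Merging each such packet into a single super-factor then yields a block decomposition $\Approx_n(t) = w_{1,n} \cdots w_{M,n}$ with $M$ bounded in terms of $t$ and $T$, where each $w_{j,n}$ is either a fixed concatenation of a bounded number of elements of $L$, or a concatenation of $d_j \cdot \Approx_n(\alpha_j)$ elements of $L$ for some fixed positive integer $d_j$.

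Each such block lies in a rational language $R_j$: for the bounded blocks, $R_j$ is a finite union of products of a bounded number of $L$-factors; for the unbounded blocks, $R_j = (L^{d_j})^+$. This gives an $(R_1,\ldots,R_M)$-splitting of $t$ into \um-terms, and Lemma~\ref{l:local-adjustment} then produces $\bar\sigma$-terms $z_1, \ldots, z_M$ with $\eval{z_1 \cdots z_M}S = v$ and $\eval{z_j}S \in \tcl{R_j}$. Tracing the proof of Lemma~\ref{l:local-adjustment}, the limiting exponent obtained in a $z_j$ coming from an unbounded block has the form $d_j \beta_j$ for some $\beta_j \in \widehat{\mathbb N}$. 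This is where purity of $\sigma$ enters decisively: since $d_j \beta_j$ is a $\bar\sigma$-exponent, purity guarantees that $\beta_j$ is one as well, and therefore $\eval{z_j}S$ may be written as a $\bar\sigma$-power with exponent $\beta_j$ of a product of $d_j$ elements of $\tcls L$. Bounded blocks are plainly finite products of elements of $\tcls L$. Invoking Remark~\ref{rem:inclusive} to replace $\bar\sigma$-operations by $\sigma$-operations inside $\acls{\tcls L}$, we conclude $v \in \acls{\tcls L}$.

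The main obstacle is the combinatorial step of the second paragraph: arranging for the $L$-factorizations of $\Approx_n(t)$ to aggregate into a bounded number of blocks simultaneously compatibly with the simplified-history structure from Section~\ref{sec:fact-sequ} and with idempotent pumping in~$T$, so that each block's $L$-multiplicity is either constant in $n$ or a fixed multiple of one of the $\Approx_n(\alpha_j)$. Once this bookkeeping is in place, purity of $\sigma$ is precisely the hypothesis that makes dividing $d_j\beta_j$ by $d_j$ legitimate and thereby produces a genuine $\bar\sigma$-power for the unbounded blocks.
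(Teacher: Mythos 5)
Your proposal captures the right high-level shape — aggregate the $L$-factors of $\Approx_n(t)$ into boundedly many blocks, apply Lemma~\ref{l:local-adjustment} to the resulting coarser splitting, and invoke purity exactly where an exponent must be divided by a positive integer — but the step you yourself flag as ``the main obstacle'' is in fact the entire content of the paper's proof of Proposition~\ref{prop:nu=1}, and no argument is offered for it. The aggregation you sketch by ``pigeonhole on the $\eta$-images of consecutive $L$-factors'' so that break points ``come in consecutive packets of $p$ whose joint $\eta$-image is a fixed idempotent'' does not obviously work: the sequence of $\eta$-images need not settle into a periodic idempotent pattern compatible with the boundaries of the $\Approx_n(s_j)$-copies, and even when a repeated idempotent is found, there is no reason for the resulting packets to tile the whole factorization with a uniformly bounded number of blocks whose $L$-multiplicities behave in the required way. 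The paper resolves this by first reducing, via an induction on $\lambda(t)=(\rk t,\nu(t))$ (Proposition~\ref{prop:reduction-nu=1}), to the case $\nu(t)=1$, and then by encoding all $L^+$-factorizations of $\Approx_k(t_0 s_1^{\alpha_1} t_1)$ as paths from $\initial$ to $\final$ in a weighted multigraph $\Gamma_k$, using Lemma~\ref{l:infinite-capacity-cycle} to extract a cycle traversed unboundedly often, and normalizing the paths through four explicit weight-preserving transformations. That machinery is precisely what produces a single dominating cycle $\gamma$ of weight $n$ traversed $\ell_k$ times, so that $\Approx_k(\alpha_1)=n_0+n_1+n\ell_k$ and in the limit $n\beta=\alpha_1-n_0-n_1$; this is where purity is applied, and your description of the arithmetic (a block of $L$-multiplicity $d_j\cdot\Approx_n(\alpha_j)$ yielding an exponent $d_j\beta_j$) has the dependency inverted relative to what actually happens.

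Two further points are missing from your sketch. First, the recursive structure: the factors $z_{q,k}\in L^*$ hidden inside each $\Approx_k(s_1)$-copy converge to elements of $\tcls{L^+}$ of strictly smaller rank, and one must invoke the induction hypothesis \eqref{eq:inclusion-iteration} on them to land in $\acls{\tcls L}$; your proposal has no induction parameter and no mechanism to handle these inner pieces. Second, the reduction to $\nu(t)=1$ is not cosmetic: it is what ensures the graph $\Gamma_k$ is built around a single infinite power $s_1^{\alpha_1}$, so that the weight arithmetic \eqref{eq:total-weight_k-case-2} produces a single linear relation against $\alpha_1$ to which purity can be applied. Attempting to handle several infinite powers $s_1^{\alpha_1},\ldots,s_m^{\alpha_m}$ simultaneously, as your single-pass aggregation suggests, would entangle several exponent equations at once and has no clear resolution. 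In short, the proposal identifies the correct lemma and the correct role of purity, but leaves the genuinely hard combinatorial core unproved.
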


The proof of Theorem~\ref{t:half-PR-for-S-2} follows the lines of its analog for
the pseudovariety \pv A which is presented
in~\cite[Section~6]{ACZ:Closures:14}, even though the
argument requires significant changes in several points.

Consider an element $v$ of~\tcls{L^+}. We must show that
there is a $\sigma$-term which coincides with~$v$ when evaluated on
(finitely many) suitable elements of~\tcls L. It turns out to be
convenient to assume more generally that $v\in\tclin{\bar\sigma}{L^+}$,
so that there exists a $\bar\sigma$-term $t$ such that $\eval t
S=v$. Therefore, we want to show that, for every $\bar\sigma$-term~$t$,
\begin{equation}
  \label{eq:inclusion-iteration}
\parbox{.8\textwidth}{for every rational
language $L$, $\eval t S\in\tcls{L^+}$ implies
$\eval tS\in\acls{\tcls L}$.}\tag{$\mathcal{P}_t$}
\end{equation}
Let $w_k=\Approx_k(t)$. The sequence of words $(w_k)_k$ converges
to~$v=\eval tS$ in~\Om XS. As $v$ belongs to the
open set \tcls{L^+}, the word $w_k$ belongs to~$L^+$ for all
sufficiently large~$k$, and we may therefore assume that there are
factorizations
\begin{equation}
  \label{eq:factorization-t}
   w_k=w_{1,k}\cdots w_{r_k,k},
\end{equation}
with each $w_{i,j}\in L$. If there is a bounded subsequence of the
sequence $(r_k)_k$, which counts the number of factors from $L$, then
Theorem~\ref{t:half-PR-for-S-1} yields that~$v$ belongs to the
subsemigroup of~\oms XS generated by \tcls L and we are done. We may
therefore assume that $\lim r_k=\infty$, which implies  that $\rk{t}\geq1$.
We first reduce the problem to the case~$\nu(t)=1$.

\begin{Prop}
  \label{prop:reduction-nu=1}
  Assume that~\eqref{eq:inclusion-iteration} holds for every
  $\bar\sigma$-term $t$ with $\nu(t)=1$.
  Then, it holds for every
  $\bar\sigma$-term $t$.
\end{Prop}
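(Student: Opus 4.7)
I would split the $L^+$-factorization of each approximant $\Approx_k(t)$ into $m=\nu(t)$ chunks, each corresponding (after extraction) to a \um-term of $\nu$-value~$1$, and then use Lemma~\ref{l:local-adjustment} to convert the resulting \um-splitting of $t$ into a $\bar\sigma$-splitting on which the hypothesis $(\mathcal{P}_\cdot)$ can be applied piecewise. Concretely, write $t = t_0 s_1^{\alpha_1} t_1 \cdots s_m^{\alpha_m} t_m$ with $m=\nu(t)\ge 2$. For every sufficiently large~$k$, $\Approx_k(t)$ decomposes naturally as $\Approx_k(t_0)\cdot Y_{1,k}\cdot\Approx_k(t_1) \cdots Y_{m,k}\cdot\Approx_k(t_m)$, where $Y_{j,k}:=\Approx_k(s_j)^{\Approx_k(\alpha_j)}$ has length tending to infinity.

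For each $j\in\{1,\dots,m-1\}$, the key step is to choose an index $i_{j,k}$ in the $L^+$-factorization $\Approx_k(t) = w_{1,k}\cdots w_{r_k,k}$ such that the cutting position $|w_{1,k}\cdots w_{i_{j,k},k}|$ falls within (or at a boundary of) $\Approx_k(t_j)$. The existence of such a choice uses rationality of~$L$ (so that $L$-piece boundaries can be tracked via a finite semigroup recognizing $L^+$), the hypothesis $r_k\to\infty$ (providing enough cuts to separate the $m$ infinite-power blocks), and passage to subsequences. Setting $W_{j,k} := w_{i_{j-1,k}+1,k}\cdots w_{i_{j,k},k}$ (with $i_{0,k}=0$ and $i_{m,k}=r_k$) then yields an $L^+$-factorization $\Approx_k(t) = W_{1,k}\cdots W_{m,k}$ in which the $j$-th chunk contains exactly the $j$-th top-level infinite-power block. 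This placement of cuts is where I expect the main combinatorial difficulty to lie.

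After extracting a filtered subsequence, each $(W_{j,k})_k$ converges in $\Om XS$ to some $v_j\in\tcl{L^+}\cap\omu XS$, with $v_j = \eval{u_j}S$ for a \um-term $u_j = t''_{j-1} s_j^{\alpha_j} t'_j$ (where $t_i = t'_i t''_i$ is read off from the cuts) of $\nu$-value one, and $u_1\cdots u_m = t$. Thus $(u_1,\dots,u_m)$ is an $(L^+,\dots,L^+)$-splitting of~$t$. Lemma~\ref{l:local-adjustment} then produces a $\bar\sigma$-term $z$ with $\eval zS=\eval tS$ together with a $\bar\sigma$-splitting $(z_1,\dots,z_m)$ such that $z_j\in\tcl{L^+}$ and $\lambda_{\bar\sigma}(z_j)=\lambda_\um(u_j)$; in particular $\nu(z_j)=1$. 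Since $\sigma$ contains $\kappa$, Remark~\ref{rem:inclusive} gives $\eval{z_j}S\in\tcl{L^+}\cap\omup{\bar\sigma}XS=\tcls{L^+}$, and applying the hypothesis $(\mathcal{P}_{z_j})$ yields $\eval{z_j}S\in\acls{\tcls L}$. Closure of $\acls{\tcls L}$ under the binary multiplication of~$\sigma$ then gives $\eval tS = \prod_{j=1}^m \eval{z_j}S \in\acls{\tcls L}$, which is the content of $(\mathcal{P}_t)$.
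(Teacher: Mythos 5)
There is a genuine gap, and it sits exactly where you suspected: the cut-placement step. The $L^+$-factorization $\Approx_k(t)=w_{1,k}\cdots w_{r_k,k}$ is \emph{given}, not chosen, and nothing forces any of its cut points to land inside (or at a boundary of) $\Approx_k(t_j)$. A single factor $w_{i,k}\in L$ may strictly contain $\Approx_k(t_j)$, straddling the end of $\Approx_k(s_j)^{\Approx_k(\alpha_j)}$ and the beginning of $\Approx_k(s_{j+1})^{\Approx_k(\alpha_{j+1})}$; the simplest instance is $t=a^\omega b a^\omega$ with $L=\{a,aba\}$ and the factorization $a\cdots a\cdot(aba)\cdot a\cdots a$, where no cut touches the middle letter $b$. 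The hypothesis $r_k\to\infty$ only says there are many cuts somewhere; they can all accumulate inside the power blocks. Moreover, even the natural fallback of cutting inside the power blocks does not salvage the plan: a piece running from a cut inside the $j$-th block to a cut inside the $(j+1)$-st block converges to a term of the form $s_j^{\beta}t_js_{j+1}^{\gamma}$ with \emph{two} infinite exponents, i.e.\ $\nu=2$, so the hypothesis for $\nu=1$ cannot be applied to it directly. Your one-shot decomposition into $m$ pieces of $\nu$-value $1$ therefore cannot be carried out as described.

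The paper instead argues by induction on $\lambda(t)=(\rk t,\nu(t))$ in lexicographic order, splitting into only two or three pieces at a time and doing a case analysis on where the existing cuts fall. If some cut lies neither in the prefix $\Approx_k(t_0s_1^{\alpha_1})$ nor in the suffix $\Approx_k(s_m^{\alpha_m}t_m)$, splitting there yields two pieces each with the same rank but strictly smaller $\nu$, and one recurses (the pieces may well have $\nu\ge 2$). Otherwise every cut is in the prefix or the suffix, which forces a single factor $w_{i_k,k}\in L$ to span the entire middle of $\Approx_k(t)$; that factor is isolated as a middle piece whose limit lies in $\tcls L$ \emph{directly} --- no appeal to the $\nu=1$ hypothesis is needed for it --- while the outer pieces are prefixes/suffixes of $t_0s_1^{\alpha_1}$ and $s_m^{\alpha_m}t_m$ and hence have $\nu\le 1$. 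This ``jumping factor'' case is the idea missing from your proposal, and it is precisely what handles the situations where your cut placement fails. The parts of your argument that invoke Lemma~\ref{l:local-adjustment} and Remark~\ref{rem:inclusive} to upgrade a \um-splitting to a $\bar\sigma$-splitting are correct and match the paper; the missing ingredient is the inductive case analysis replacing the impossible direct placement of cuts.
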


\begin{proof}
  Let $t$ be a $\bar\sigma$-term such that $\nu(t)>1$. Let
  $v=\eval tS$, and assume that $v\in\tcls{L^+}$. To show
  that $v\in\acls{\tcls{L}}$, we proceed by induction on
  $\lambda(t)$, for the lexicographical order on $\mathbb{N}\times\mathbb{N}$. Consider the factorization of~$t$ in $\bar\sigma$-terms as
  in~\eqref{eq:t} and the factorization~\eqref{eq:factorization-t} of
  $w_k=\Approx_k(t)$. We distinguish three cases.

  \smallskip
  \paragraph{\textbf{Case 1}} Suppose first that there are infinitely many indices~$k$
  for which there exists $i_k\in\{1,\ldots,r_{k}\}$ such that the first
  letter of the simplified history
  $$\sh_{k}(t,w_{1,k}\cdots
  w_{i_k,k},w_{i_k+1,k}\cdots   w_{r_{k},k})$$ is of one of the forms $(1,j)$ with $0\neq j\neq m$, 
  or $(2,j)$ with $1\neq j\neq m$. That is, the corresponding factorizations
  $(x_k,y_k)$, where $x_k=w_{1,k}\cdots w_{i_k,k}$ and
  $y_k=w_{i_k+1,k}\cdots w_{r_{k},k}$, do not split $\Approx_k(t)$ in
  its prefix $\Approx_k(t^{}_0s_1^{\alpha_1})$, nor in its suffix
  $\Approx_k(s_m^{\alpha_m}t^{}_m)$.
   
  By Lemma~\ref{lem:converging-factorization-sequences}, both $(x_k)_k$ and $(y_k)_k$ admit
  subsequences converging to \um-words, say $v_1=\eval{u_1}S$ and $v_2=\eval{u_2}S$ respectively,
  with $u_1u_2=t$. Since both $x_k$ and $y_k$ belong to $L^+$, we deduce that
  $v_1,v_2\in\tcl{L^+}$. Therefore, one can apply Lemma~\ref{l:local-adjustment}: there exist
  $\bar\sigma$-terms $z_1$ and $z_2$ such that $v=\eval{z_1z_2}S$, and for $i=1,2$,
  $\lambda(z_i)=\lambda(u_i)$ and $\eval{z_i}S\in\tcl{L^+}$. By Remark~\ref{rem:inclusive}, we
  obtain $\eval{z_i}S\in\tcls{L^+}$. By the assumption on the first letter of the simplified
  histories, we have $\rk{u_i}=\rk t$ and $\nu(u_i)<\nu(t)$, hence
  $\lambda(z_i)=\lambda(u_i)<\lambda(t)$ ($i=1,2$).  Arguing inductively, we deduce that
  $\eval{z_1}S$ and $\eval{z_2}S$ belong to $\acls{\tcls{L}}$, whence so does
  $v=\eval{z_1z_2}S=\eval{z_1}S\cdot\eval{z_2}S$. This concludes the proof for Case~1.

  \medskip\paragraph{\textbf{Case 2}}
  Assume now that for all sufficiently large $k$, there is an index
  $i_k$ such that the first letters of the simplified histories
  \begin{align*}
    &\sh_k(t,w_{1,k}\cdots w_{i_k-1,k},w_{i_k,k}\cdots w_{r_k,k})\\
    &\sh_k(t,w_{1,k}\cdots w_{i_k,k},w_{i_k+1,k}\cdots w_{r_k,k})
  \end{align*}
  are of the forms (1,0) or (2,1) for the first one,  and $(1,m)$ or $(2,m)$ for
  the second one. In other words, the factor $w_{i_k,k}$ of $\Approx_k(t)$ jumps from the
  prefix $\Approx_k(t^{}_0s_1^{\alpha_1})$ to the suffix
  $\Approx_k(s_m^{\alpha_m}t^{}_m)$.

  As in the first case, we may apply twice
  Lemma~\ref{lem:converging-factorization-sequences} to obtain
  from the following sequence of factorizations
  \begin{displaymath}
    w_k=w_{1,k}\cdots w_{i_k-1,k}\cdot w_{i_k,k} \cdot w_{i_k+1,k}\cdots w_{r_k,k}\qquad(k\ge1),
  \end{displaymath}
  an $(L^+,L,L^+)$-splitting of~$t$ into \um-terms, say $t=u_1u_2u_3$.  Applying
  Lemma~\ref{l:local-adjustment}, we deduce that there exists an $(L^+,L,L^+)$-splitting
  $(z_1,z_2,z_3)$ into $\bar\sigma$-terms of a $\bar\sigma$-term $z$ such that $\eval zS=v$
  and $\lambda(z_i)=\lambda(u_i)$, $i=1,2,3$.  By Remark~\ref{rem:inclusive}, we obtain
  $\eval {z_1}S,\eval {z_3}S\in\tcls{L^+}$ and $\eval {z_2}S\in\tcls{L}$.  By the hypothesis
  of Case 2, we know that for $i=1,3$, we have either $\rk{u_i}=\rk t$ and $\nu(u_i)=1$, or
  $\rk{u_i}<\rk t$. Hence, $\lambda(z_i)<\lambda(t)$.  Thus, we may apply the induction
  hypothesis to deduce that $\eval{z_1}S$ and $\eval{z_3}S$ belong to $\acls{\tcls
    L}$. Hence, we finally have $v=\eval{z_1}S\cdot\eval{z_2}S\cdot\eval{z_3}S\in\acls{\tcls{L}}\cdot\tcls{L}\cdot\acls{\tcls{L}}\subseteq\acls{\tcls{L}}$.

  \medskip\paragraph{\textbf{Case 3}}  
  Assume finally that for all sufficiently large $k$ and for all indices
  $i_k$, the first letter of the simplified history
  $\sh_k(t,w_{1,k}\cdots w_{i_k-1,k},w_{i_k,k}\cdots w_{r_k,k})$~is
  \begin{enumerate}[$(a)$]
  \item\label{item:1} either of the form (1,0) or (2,1), which means that $w_{r_k,k}$
    spans from the prefix $\Approx_k(t^{}_0s_1^{\alpha_1})$ to the end
    of  $\Approx_k(t)$,
  \item\label{item:2} or of the form $(1,n)$ or $(2,n)$, which means that $w_{1,k}$ jumps from the beginning of
    $\Approx_k(t)$ to the suffix $\Approx_k(s_m^{\alpha_m}t^{}_m)$.
  \end{enumerate}
  This case is treated as Case 2, setting $v_3$  (resp.\ $v_1$) to be the empty term,
  in case \ref{item:1} or  \ref{item:2} occurs.
\end{proof}

To conclude the proof of Theorem~\ref{t:half-PR-for-S-2}, it remains to
treat the case where $\nu(t)=1$. For dealing with this case, we use
directed weighted multigraphs.  A \emph{(multi)graph} is a tuple
$(Q,(E_{p,q})_{(p,q)\in Q\times Q})$ where $Q$ is a set of vertices, and
$E_{p,q}$ is a set of edges having source $p$ and target $q$, for each
pair of vertices $(p,q)\in Q\times Q$. In the sequel, the graphs shall
always be finite. A \emph{weighted multigraph} is given by a multigraph
together with a weight function, which associates to each edge $e$ a
nonnegative integer $\weight(e)$. If $e$ is an edge with source $p$ and
target $q$, we represent this edge by $$p\xrightarrow{\weight(e)}q.$$

For a path $\gamma$ of a graph $\Gamma$, let $c(\gamma)$ denote the
edge-induced subgraph of $\Gamma$ whose edges are those traversed
by~$\gamma$. We call $c(\gamma)$ the \emph{support}
of~$\gamma$. Furthermore, if~$\zeta$ is an edge of~$\Gamma$, then
$|\gamma|_\zeta$ denotes the number of times $\gamma$ goes through the
edge~$\zeta$. For a subgraph $\Gamma'$ of~$\Gamma$, we denote by
$|\gamma|_{\Gamma'}$ the \emph{minimum} of $|\gamma|_\zeta$ with $\zeta$ an arbitrary
edge of $\Gamma'$.

\begin{Lemma}
  \label{l:infinite-capacity-cycle}
  Let $(\pi_k)_k$ be a sequence of paths of a finite multigraph
  $\Gamma$. If there is some edge $\zeta$ for which the sequence
  $(|\pi_k|_\zeta)_k$ is unbounded, then there is some cycle $\gamma$
  such that $(|\pi_k|_{c(\gamma)})_k$ is unbounded.
\end{Lemma}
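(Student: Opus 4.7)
The plan is to identify a set $E_\infty$ of ``persistently traversed'' edges and show that it must contain (the edge set of) a directed cycle of $\Gamma$. First, since $(|\pi_k|_\zeta)_k$ is unbounded, I extract a subsequence along which $|\pi_k|_\zeta \to +\infty$. Then, because $\Gamma$ has only finitely many edges, a further diagonal extraction yields a subsequence $(\pi_{k_r})_r$ such that, for every edge $e$ of $\Gamma$, the integer sequence $(|\pi_{k_r}|_e)_r$ is either bounded or tends to $+\infty$. Let $E_\infty$ denote the set of edges of the latter type; by construction $\zeta \in E_\infty$.

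The central step is to prove that the subgraph $(V, E_\infty)$ contains a directed cycle. I argue by contradiction using a telescoping/potential-function identity. Assume $(V, E_\infty)$ is acyclic; then a topological sort provides a function $h : V \to \mathbb{N}$ with $h(\mathrm{tgt}(e)) - h(\mathrm{src}(e)) \geq 1$ for every $e \in E_\infty$. For any walk $\pi$ in $\Gamma$, telescoping the increments of $h$ along $\pi$ gives
\[
\sum_{e} |\pi|_e\, \bigl(h(\mathrm{tgt}(e)) - h(\mathrm{src}(e))\bigr) \;=\; h(\mathrm{end}(\pi)) - h(\mathrm{start}(\pi)),
\]
so the left-hand side is bounded in absolute value by the diameter of $h(V)$. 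Applied to $\pi_{k_r}$ and split across $E_\infty$ and its complement $E_b$, the contribution from $E_b$ stays uniformly bounded (by finiteness of $\Gamma$, the fixed range of $h$, and boundedness of $|\pi_{k_r}|_e$ for $e \in E_b$), while each summand over $E_\infty$ is nonnegative and the single term for $\zeta$ is at least $|\pi_{k_r}|_\zeta \to +\infty$. This contradiction yields a directed cycle $\gamma$ of $\Gamma$ with $c(\gamma) \subseteq E_\infty$.

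For such a $\gamma$, every edge $e \in c(\gamma)$ satisfies $|\pi_{k_r}|_e \to +\infty$, and since $c(\gamma)$ is a finite set of edges, $|\pi_{k_r}|_{c(\gamma)} = \min_{e \in c(\gamma)} |\pi_{k_r}|_e$ also tends to $+\infty$; in particular the original sequence $(|\pi_k|_{c(\gamma)})_k$ is unbounded, as required. I expect the telescoping argument for locating the cycle to be the only real obstacle: once the potential function $h$ is set up, the separation of the sum into its $E_\infty$ and $E_b$ parts does all the work, and the remainder of the argument is bookkeeping.
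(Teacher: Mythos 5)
Your proof is correct, though it takes a genuinely different route from the paper's. The paper's argument is a short pigeonhole: among the finitely many possible subpaths of length equal to the number of vertices of~$\Gamma$ that start with the edge~$\zeta$, one such subpath~$\delta$ must occur an unbounded number of times along the $\pi_k$; since $\delta$ has more edges than~$\Gamma$ has vertices, it repeats a vertex and hence contains a cycle, each of whose edges is traversed at least as often as $\delta$ occurs. Your approach instead passes to a subsequence along which every edge count is either bounded or tends to infinity, sets $E_\infty$ to be the latter class of edges (which contains~$\zeta$), and shows by contradiction that the directed subgraph on $E_\infty$ cannot be acyclic: a topological-sort potential~$h$, together with the telescoping identity along the walks $\pi_{k_r}$, forces the nonnegative and eventually huge contribution of the $\zeta$-term to overwhelm the uniformly bounded contributions from the remaining edges and from the endpoints of the path. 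Both arguments are sound. The paper's is shorter and more directly combinatorial; yours isolates a cleaner intermediate structural statement --- that some directed cycle lies entirely within the set $E_\infty$ of persistently traversed edges --- which makes the location of the cycle explicit and would be reusable in variants of the lemma.
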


\begin{proof}
  Consider on each path $\pi_k$ the subpaths which start with the
  edge~$\zeta$ and whose length is the total number of
  vertices of the graph~$\Gamma$. Since there are only
  finitely many such subpaths, at least one of them, say $\delta$,
  must be used an unbounded number of times. Because $\delta$ must go
  at least twice through the same vertex, $\delta$ contains some cycle
  which satisfies the required condition.
\end{proof}

\noindent
We conclude the proof of Theorem~\ref{t:half-PR-for-S-2} by establishing
the following result, which, combined with
Proposition~\ref{prop:reduction-nu=1}, implies that
Property~\eqref{eq:inclusion-iteration} holds for every
$\bar\sigma$-term~$t$.
\begin{Prop}
  \label{prop:nu=1}
  Property~\eqref{eq:inclusion-iteration} holds for every $\bar\sigma$-term $t$ with $\nu(t)=1$.
\end{Prop}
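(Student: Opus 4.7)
The plan is to encode the factorizations $\Approx_k(t)=w_{1,k}\cdots w_{r_k,k}$, with $w_{i,k}\in L$ and $r_k\to\infty$, as paths in a finite weighted multigraph $\Gamma$, extract via Lemma~\ref{l:infinite-capacity-cycle} a cycle $\gamma$ traversed unboundedly often, and exploit purity of $\sigma$ to turn the resulting exponent into a $\bar\sigma$-exponent so that the iterated cycle is expressible in $\acls{\tcls L}$.

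Since $\nu(t)=1$, write $t=t^{}_0 s_1^{\alpha_1}t^{}_1$. Fix $\varphi\colon X^+\to S$ recognizing $L$ and set $N_k=\Approx_k(\alpha_1)$. Pass to a filtered subsequence stabilizing the simplified histories and the exponent vectors. If unboundedly many cuts fall inside $\Approx_k(t^{}_0)$ or $\Approx_k(t^{}_1)$, an induction on $\rk t$ (using $\rk{t^{}_0},\rk{t^{}_1}<\rk t$) handles those cuts through the inductive hypothesis, so we may assume all but a bounded number of cuts have simplified-history first letter $(2,1)$, meaning they lie inside the power $\Approx_k(s_1)^{N_k}$. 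The bounded-length boundary blocks produced by this reduction converge, by Lemma~\ref{l:local-adjustment} combined with Theorem~\ref{t:half-PR-for-S-1}, into $\acls{\tcls L}$.

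Next I would construct $\Gamma$, whose vertices are a finite set of \emph{position types} for a $(2,1)$-cut inside one copy of $\Approx_k(s_1)$; defining this set uniformly in~$k$ requires a structural induction on~$\rk{s_1}$ exploiting the recursive form of $s_1$ as a $\bar\sigma$-term. Each interior factor becomes an edge $p\to p'$ labeled by $\varphi(w_{i,k})$ and weighted by the number of full copies of $\Approx_k(s_1)$ it strictly contains, so that the interior factorization is a path $\pi_k$ in $\Gamma$ whose total weight equals $N_k$ up to a bounded integer correction. As $|\pi_k|\to\infty$, pigeonholing and Lemma~\ref{l:infinite-capacity-cycle} deliver a cycle $\gamma$ with $n_k:=|\pi_k|_{c(\gamma)}\to\infty$. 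Extract further so that $n_k\to\beta\in\widehat{\mathbb N}\setminus\mathbb N$; letting $d=\weight(\gamma)$, we get $d\beta=\alpha_1-m$ for some integer $m$, and since $\alpha_1$ is a $\bar\sigma$-exponent and $\bar\sigma$ is closed under $\alpha\mapsto\alpha\pm 1$, the value $d\beta$ is a $\bar\sigma$-exponent. By \emph{purity} of $\sigma$, $\beta$ itself is a $\bar\sigma$-exponent. Applying Lemma~\ref{l:local-adjustment} to one traversal of $\gamma$ produces a $\bar\sigma$-term evaluating in $\Om XS$ to some $v_\gamma\in\tcls L^{|\gamma|}\subseteq\acls{\tcls L}$; then $v_\gamma^\beta\in\acls{\tcls L}$, and concatenating with the bounded-length leftover of $\pi_k$ and the boundary parts from the previous paragraph yields $\eval t S\in\acls{\tcls L}$, which is property~$(\mathcal P_t)$.

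The principal obstacle is making $\Gamma$ genuinely finite when $\rk{s_1}\geq 1$: positions in $\Approx_k(s_1)$ are then unbounded in~$k$, forcing a structural induction on $s_1$ to define a finite set of position types that are meaningful uniformly in $k$. A secondary technical point is that $\pi_k$ uses each edge of $\gamma$ at least $n_k$ times but in an arbitrarily interleaved order, so rearranging $\pi_k$ into $n_k$ consecutive traversals of $\gamma$ without altering the product in $\Om XS$ relies on a further pigeonhole on the intermediate images under $\varphi$ in the finite semigroup~$S$. Purity enters only at the very end, but is indispensable: without it one could not descend from $d\beta$ being a $\bar\sigma$-exponent to $\beta$ being one, and so the iterated cycle would not be expressible as a $\sigma$-power.
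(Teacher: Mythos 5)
Your overall plan — encoding the factorization into a finite weighted multigraph, extracting a cycle via Lemma~\ref{l:infinite-capacity-cycle}, and invoking purity to descend from a $\bar\sigma$-exponent $d\beta$ to $\beta$ — does match the paper's strategy, and you have correctly located where purity is indispensable. However, there is a genuine gap in the way you handle the dichotomy of path behaviors. You assert ``$|\pi_k|\to\infty$'' and only treat the case where the cycle count diverges, but $r_k\to\infty$ does \emph{not} force the number of edges in $\pi_k$ to diverge: a single edge may span an unbounded number of copies of $\Approx_k(s_1)$ (its weight grows), while the path length stays bounded. The paper makes this explicit: after applying transformations to ensure at most one edge has weight $\geq m+p$, it splits into two cases. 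In the first case (some edge has weight $\geq m$), Lemma~\ref{l:infinite-capacity-cycle} is used in the contrapositive to show paths have bounded length; the growing quantity is a single exponent $e_{i,k}$ converging to $\alpha_1-\sum_{j\ne i}e_j$, and \emph{completeness} of $\bar\sigma$ (not purity) makes this a $\bar\sigma$-exponent. Only in the second case (all edge weights $<m$, so path length diverges) does the cycle appear and purity enter. Your proposal omits the first case entirely, so it is incomplete.

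Your ``principal obstacle'' (making $\Gamma$ finite for $\rk{s_1}\geq1$ via structural induction on $s_1$) also diverges from the paper's much simpler device: the vertex set is taken to be pairs $(a,b)\in M\times M$ with $\Approx_k(s_1)\in\varphi^{-1}(a)L^*\varphi^{-1}(b)$, together with $\initial$ and $\final$. This is finite because $M$ is finite, regardless of $\rk{s_1}$; positions inside a copy of $\Approx_k(s_1)$ are typed by the $\varphi$-images of the prefix and suffix around the internal $L^*$-block. No structural induction on $s_1$ is needed, and it is unclear that your proposed induction would yield a uniformly finite type set without reinventing this. Finally, your ``secondary technical point'' about rearranging $\pi_k$ into consecutive cycle traversals is correctly flagged as nontrivial, but a pigeonhole on $\varphi$-images is not quite the right tool; the paper uses Euler's Theorem combined with explicit path transformations (reordering, exchanging cycle multiplicities, shifting weight $p$ between high-weight edges, and collapsing $p$ cycle traversals into a weight increment $np$) that preserve the total weight and hence the underlying word, while the resulting change of factorization is absorbed because the relevant factors land in $\acls{\tcls L}$, which is closed under multiplication and arbitrary reassociation.
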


\begin{proof}
  Let $t$ be a $\bar\sigma$-term with $\nu(t)=1$. Let $w=\eval tS$. Assuming that
  $w\in\tcls{L^+}$, we want to show that $w\in\acls{\tcls{L}}$. We have
  $t=t^{}_0s_1^{\alpha_1}t^{}_1$, with $\rk{t_0},\rk{t_1}\leq\rk{s_1}=\rk t-1$. Let
  $w_k=\Approx_k(t)$. Since for $k$ large enough, we have $w_k\in L^+$, one may consider a
  factorization~\eqref{eq:factorization-t} of~$w_k$.  Using a similar argument as in the
  proof of Case~2 of Proposition~\ref{prop:reduction-nu=1}, we may assume, replacing
  $(w_k)_k$ by a subsequence if necessary, that $\Approx_{k}(t_0)$ is a prefix of~$w_{1,k}$
  and $\Approx_{k}(t_1)$ is a suffix of~$w_{r_{k},k}$.
  
  Since $L$ is a rational language of~$X^+$, there is a homomorphism
  $\varphi:X^*\to M$ onto a finite monoid $M$ such that
  $\varphi^{-1}(1)=\{1\}$ and $\varphi^{-1}(\varphi(L))=L$. Let $m$ and
  $p$ be positive integers such that
  \begin{equation}
    \label{eq:half-PR-for-S-2-monoid-index-period}
    a^{m+p}=a^m \text{ for every $a\in M$.}
  \end{equation}
  We construct for each $k$ a finite directed multigraph $\Gamma_k$.
  The set of vertices~is
  \begin{equation*}
    V_k=\bigl\{(a,b)\in M\times M:
    \Approx_k(s_1)\in\varphi^{-1}(a)L^*\varphi^{-1}(b)\bigr\}
    \cup\{\initial,\final\},
  \end{equation*}
  where the two symbols $\initial$ and ${\final}$ do not belong to~$M$.
  The following are the edges of the graph~$\Gamma_k$, where $e$ denotes
  a natural number that does not exceed $\Approx_k(\alpha_1)$:
  \begin{itemize}
  \item there is an edge $(a_1,b_1)\xrightarrow{e+1}(a_2,b_2)$ if
    $L\cap\varphi^{-1}(b_1)\Approx_k(s_1^e)\varphi^{-1}(a_2)\neq\emptyset$;
  \item there is an edge $\initial\xrightarrow e(a,b)$ if
    $L\cap \Approx_k(t_0s_1^e)\varphi^{-1}(a)\neq\emptyset$;
  \item there is an edge $(a,b)\xrightarrow{e+1}{\final}$ 
    if $L\cap\varphi^{-1}(b)\Approx_k(s_1^et_1)\neq\emptyset$.
  \end{itemize}
  Observe that, in view of~\eqref{eq:half-PR-for-S-2-monoid-index-period}, there is an edge
  in the graph of the form $q_1\xrightarrow e q_2$ with $e\geq m$ if
  and only if there is also an edge $q_1\xrightarrow{e+p}q_2$ and $e+p\le\Approx_k(\alpha_1)$.
  
  The purpose of this graph is to capture factorizations of the product
  $\Approx_k(t_0)\Approx_k(s_1)^{\Approx_k(\alpha_1)}\Approx_k(t_1)$ belonging
  to $L^+$. More precisely, for each $k$, the factorizations
  \begin{equation}
    \label{eq:half-PR-for-S-2-factorizations}
    \begin{split}
      w_k %
      &=\Approx_k(t_0)\Approx_k(s_1)^{\Approx_k(\alpha_1)}\Approx_k(t_1) \\
      &=w_{1,k}\cdots w_{r_k,k}
    \end{split}
  \end{equation}
  determine a path $\pi_k$ from vertex $\initial$ to vertex~${\final}$:
  the factors $w_{i,k}$ which are not completely contained in
  some factor $\Approx_k(s_1)$ determine the edges. Each
  intermediate vertex in the path corresponds to a factor
  $\Approx_k(s_1)$ together with a factorization into a word,
  followed by a possibly empty product of elements from~$L$, followed
  by a word, where only the values under $\varphi$ of the prefix and
  suffix words are relevant.

  Conversely, every path $\gamma$ from $\initial$ to ${\final}$ determines
  a factorization of a word of the form $\Approx_k(t_0s_1^\ell
  t_1)$ into a product of elements of~$L$. Indeed, we may choose for
  each intermediate vertex $q$ words $u_{q,k},v_{q,k}\in X^*$ and
  $z_{q,k}\in L^*$ such that
  \begin{equation}
    \label{eq:z}
    \Approx_k(s_1)=u_{q,k}z_{q,k}v_{q,k}.
  \end{equation}
  Then, for each edge
  $\zeta:q_\zeta\xrightarrow{e+1}q_\zeta'$, the word
  \begin{equation}
    \label{eq:w_zeta}
    y_{\zeta,k}=v_{q_\zeta,k}\,\Approx_k(s_1^e)\,u_{q_\zeta',k}
  \end{equation}
  belongs to~$L$. If the path $\gamma$~is the sequence of edges
  $(\zeta_0,\zeta_1,\ldots,\zeta_r)$, with
  $\zeta_0:\initial\xrightarrow{e}q_{\zeta_0}'$ and
  $\zeta_r:q_{\zeta_r}\xrightarrow{e+1}{\final}$, then we also have words
  \begin{align*}
    y_{\zeta_0,k}&=\Approx_k(t_0s_1^e)\,u_{q_{\zeta_0}',k} \\
    y_{\zeta_r,k}&=v_{q_{\zeta_r},k}\,\Approx_k(s_1^et_1)
  \end{align*}
  in $L$. Then the following is the factorization associated with the
  path:
  \begin{equation}
    \label{eq:path-factorization}
    \Approx_k(t_0s_1^\ell t_1) %
    =y_{\zeta_0,k}\, %
    z_{q_{\zeta_1},k}\, %
    y_{\zeta_1,k} %
    \cdots %
    z_{q_{\zeta_{r}},k}\, %
    y_{\zeta_r,k}.
  \end{equation}
  The total number $\ell$ of factors $\Approx_k(s_1)$ that are covered by
  following the path~$\gamma$ is the sum of the weights of the edges, taking
  into account multiplicities; we call it the \emph{total weight} of the
  path. Combining with Euler's Theorem~\cite[Theorem~5.7.1]{Almeida:1994a}, it is now easy to deduce that
  each of the following transformations does not change the total weight
  of a path and therefore the value of the left side of the
  equality~\eqref{eq:path-factorization}: 
  \begin{enumerate}
  \item\label{item:key-1} to traverse the edges in a path in a
    different order, without changing the number of times we go
    through each edge;
  \item\label{item:key-2} suppose that in the support of the path
    there are two cycles $\delta_1$ and $\delta_2$, with respective
    total weights $n_1$ and $n_2$, and that the positive integers
    $r_1$ and $r_2$ are such that $n_1r_1=n_2r_2$; suppose further
    that the path goes through each edge in $\delta_i$ more than $r_i$
    times; to replace the path by another one which goes through each
    edge in $\delta_1$ less $r_1$ times than before and through each
    edge in $\delta_2$ more $r_2$ than before;
  \item\label{item:key-3} if there are two edges
    $q_1\xrightarrow{e_1}q_2$ and $q_3\xrightarrow{e_2+p}q_4$ in the
    path with both $e_1,e_2\geq m$, to replace in the path one
    occurrence of the edge $q_1\xrightarrow{e_1}q_2$ by that of the
    edge $q_1\xrightarrow{e_1+p}q_2$, provided we compensate by
    replacing one occurrence of the edge $q_3\xrightarrow{e_2+p}q_4$
    by $q_3\xrightarrow{e_2}q_4$;
  \item\label{item:key-4} suppose that in the support of the path
    there is a cycle $\delta$ with total weight $n$ and that the path
    goes through each edge in~$\delta$ at least $p+1$ times; suppose
    further that there is an edge $q_1\xrightarrow eq_2$ with weight
    at least $m$; replace the path by another one which goes through
    each edge in $\delta$ less $p$ times than before and change the
    edge $q_1\xrightarrow eq_2$ by $q_1\xrightarrow{e+np}q_2$.
  \end{enumerate}

  Using transformations of type~\ref{item:key-3}, we may assume that
  the path $\pi_k$ goes through at most one edge whose weight exceeds
  $m+p-1$. Therefore,  the remaining edges in $\pi_k$ are taken from a fixed finite
  set. Thus, by taking a subsequence, we may further assume that all
  paths $\pi_k$ use exactly the same edges of weight at most $m+p-1$
  and, either none of the $\pi_k$ use any other edges or, otherwise
  they all use only one additional edge $\zeta_k$ connecting  two fixed vertices.
  Hence all the graphs $c(\pi_k)$ are the same finite graph, up to an
  isomorphism that only changes one edge.

  On the other hand, using transformations of type~\ref{item:key-4}, we
  may assume that if all the paths $\pi_k$ go through some edge of
  weight at least $m$, then the graph $c(\pi_k)$ contains no cycle in
  which every edge is used at least $p+1$ times.
  
  We now split the argument into two cases. Suppose first that every
  $c(\pi_k)$ contains an edge of weight at least $m$.  In this case, one
  can apply Lemma~\ref{l:infinite-capacity-cycle} to deduce that there
  is a bound on the length of the paths $\pi_k$ and, therefore, we may
  assume that they all have the same length. Moreover, we may further
  assume that, except for the edge $\zeta_{i,k}$, at the same position~$i$,
  all paths
  $\pi_k=(\zeta_0,\ldots,\zeta_{i-1},\zeta_{i,k},\zeta_{i+1},\ldots,\zeta_r)$
  are identical. Consider the factorizations
  $$w_{k} %
  =y_{\zeta_0,k}\, %
  z_{q_{\zeta_1},k}\, %
  \cdots %
  y_{\zeta_{i-1},k}\, %
  z_{q_{\zeta_{i}},k}\, %
  \,y_{\zeta_{i,k},k}\, %
  z_{q_{\zeta_{i+1}},k}\, %
  y_{\zeta_{i+1},k}\, %
  \cdots %
  z_{q_{\zeta_{r}},k}\, %
  y_{\zeta_r,k}$$
  of the form~\eqref{eq:path-factorization} associated with each of
  the paths $\pi_k$.

  Let $e_j$ be the weight of each edge $\zeta_j$ ($j\neq i$) and let
  $e_{i,k}$ be the weight of the edge $\zeta_{i,k}$. Computing the
  total weight, we obtain the formula
  \begin{equation}
    \label{eq:total-weight_k-case-1}
    \Approx_{k}(\alpha_1)=e_{i,k}+\sum_{j\neq i}e_j.
  \end{equation}
  Letting $k\to\infty$ in~\eqref{eq:total-weight_k-case-1}, we deduce that $\lim
  e_{i,k}=\alpha_1-\sum_{j\neq i}e_j$ and, therefore, $e_i=\lim e_{i,k}$ is a $\bar\sigma$-exponent,
  since so is $\alpha_1$ and since by definition, $\bar\sigma$ is complete.

  According to~\eqref{eq:w_zeta}, the factors $y_{\zeta_j,k}$ with
  $j\notin\{0,i,r\}$ are given by
  $$y_{\zeta_j,k}=v_{q_{\zeta_j},k}\Approx_{k}(s_1^{e_j})u_{q_{\zeta_{j+1},k}}.$$
  By compactness of \Om XS, we may assume that each of the sequences
  $(u_{q,k})_k$, $(v_{q,k})_k$, and $(z_{q,k})_k$ converges to the
  respective limit $u_q$, $v_q$, and $z_q$
  ($q=q_{\zeta_1},\ldots,q_{\zeta_r}$). Let
  \begin{align*}
    y_0&=\eval{t_0s_1^{e_0}}Su_{q_{\zeta_1}}\\
    y_r&=v_{q_{\zeta_r}}\eval{s_1^{e_r}t_1}S\\
    y_j&=v_{q_{\zeta_j}}\eval{s_1^{e_j}}Su_{q_{\zeta_{j+1}}} \
    (j=1,\ldots,r-1).
  \end{align*}
  Then we obtain a factorization
  $$w %
  =y_0\, %
  z_{q_{\zeta_1}}\, %
  \cdots %
  y_{i-1}\, %
  z_{q_{\zeta_{i}}}\, %
  \,y_i\, %
  z_{q_{\zeta_{i+1}}}\, %
  y_{i+1}\, %
  \cdots %
  z_{q_{\zeta_{r}}}\, %
  y_r$$
  in which each $y_j$ belongs to $\tcl L$, while the $z_q$ belong to
  $\tcl{L^+}\cup\{1\}$. By Lemma~\ref{l:local-adjustment}, we may
  assume that each $u_q$, $v_q$, and $z_q$ is a $\bar\sigma$-word. By definition~\eqref{eq:z} of the words $z_{q,k}$, the
  latter has rank less than $\rk w$. It follows that so is each $y_j$.
  Hence the $y_j$'s belong to $\tcls L$ and the $z_q$ belong
  to~$\tcls{L^+}\cup\{1\}$. By the induction hypothesis, each $z_q$
  belongs to~$\acls{\tcls L}$. Hence, $w$ belongs to~$\acls{\tcls L}$,
  which completes the proof of the first case.

  \medskip
  It remains to consider the case where all edges have weight less
  than~$m$. By previous reductions, we know that the graph $c(\pi_k)$ is
  constant. Because the total weight tends to $\infty$, so does the
  length of the path $\pi_k$. By
  Lemma~\ref{l:infinite-capacity-cycle}, there is some simple cycle
  $\gamma$ for which the sequence $(|\pi_k|_{c(\gamma)})_k$ is
  unbounded. Applying transformations of type~\ref{item:key-2}, we may
  assume that there is only one such cycle. By
  Lemma~\ref{l:infinite-capacity-cycle}, we deduce that the paths
  $\delta$ with $c(\delta)\subseteq c(\pi_k)$ which go at most once
  through each edge in~$c(\gamma)$ have bounded length. Hence, using
  transformations of type~\ref{item:key-1}, we may assume that there
  is a path $\delta$ from $\initial$ to ${\final}$ 
  such that the path
  $\pi_k$ is obtained by inserting the power cycle $\gamma^{\ell_k}$
  at a fixed vertex in the path~$\delta$, say $\pi_k$ is the
  concatenated path $\delta_0\gamma^{\ell_k}\delta_1$. Let the total
  weights of the paths $\delta_i$ and $\gamma$ be respectively $n_i$
  and $n$. Then the total weight of the path $\pi_k$ is given by
  the formula
  \begin{equation}
    \label{eq:total-weight_k-case-2}
    \Approx_{k}(\alpha_1)=n_0+n_1+n\ell_k.
  \end{equation}
  By taking a subsequence, we may assume that the sequence
  $(\ell_k)_k$ converges to some $\beta\in\widehat{\mathbb{N}}$.
  From \eqref{eq:total-weight_k-case-2}, it follows that
  $n\beta=\alpha_1-n_0-n_1$. Since the signature $\sigma$ is assumed
  to be pure, we deduce that $\beta$ is a $\bar\sigma$-exponent.

  By the argument in the preceding case, using the induction
  hypothesis, each of the paths $\delta_i$ and $\gamma$ determines a
  corresponding element of~$\acls{\tcls L}$, say respectively $y_i$
  and $y$, such that $w=y_0y^\beta y_1$. Since $\beta$ is a
  $\bar\sigma$-exponent, we may now end the proof by observing that it
  follows that $w\in\acls{\tcls L}$.
\end{proof}

\section{Pin-Reutenauer versus fullness}
\label{sec:PR}

In this section, we apply the main results of Section~\ref{sec:closures}
to show that the Pin-Reutenauer procedure is valid for many
pseudovarieties. For this purpose, we establish relationships between
that property and fullness, a notion introduced
by~\cite{Almeida&Steinberg:2000a}. See
also~\cite{Almeida&Steinberg:2000b,ACZ:Closures:14} for related properties and
other applications of fullness.

Recall that \pj V denotes the natural continuous homomorphism from $\Om XS$ to $\Om XV$. The
pseudovariety \pv V is said to be \emph{full} with respect to a class \Cl C of subsets
of~\oms XS if the following equality holds for every $L\in\Cl C$:
\begin{equation}
  \label{eq:fullness}
  \pj V(\tcls L)=\tclsv V{\pj V(L)}.
\end{equation}
The closure of the left hand side of~\eqref{eq:fullness} is taken in \oms XS, while the
closure of the right hand side is taken in \oms XV.
We say that \pv V is \emph{(weakly) $\sigma$-full} if, for every
finite alphabet $X$, \pv V is full with respect to the set of all
rational languages of~$X^+$. We also say that \pv V is \emph{strongly
  $\sigma$-full} if, for every finite alphabet $X$, \pv V is full with
respect to the class of all rational subsets of~\oms XS.

\subsection{The general case}
\label{sec:general-case}

We first consider the case of arbitrary pseudovarieties and signatures.

\begin{Prop}
  \label{p:PR->full}
  Let $\sigma$ be an arbitrary implicit signature, \pv V be a
  pseudovariety, and \Cl C be the closure under the rational operations
  of some set of finite subsets of~\oms XS. If the Pin-Reutenauer
  procedure holds for~\Cl C, then \pv V is full with respect
  to~\Cl C.
\end{Prop}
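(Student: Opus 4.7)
The fullness identity $\pj V(\tcls L) = \tclsv V{\pj V(L)}$ splits into two inclusions, and the forward one is automatic: the map $\pj V$ restricts to a continuous map $\oms X S \to \oms X V$, and continuous maps carry closures into closures. My plan is therefore to establish the reverse inclusion $\tclsv V{\pj V(L)} \subseteq \pj V(\tcls L)$ by induction on a rational expression presenting $L \in \Cl C$ from the chosen finite generating family.

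The base case, in which $L$ is finite, is trivial: both sides coincide with $\pj V(L)$, since finite subsets of the Hausdorff spaces $\oms X S$ and $\oms X V$ are closed. The union case is equally immediate, as closure commutes with finite union. For the product case $L = KM$, I would invoke the Pin-Reutenauer identity~\eqref{eq:PR-product} to rewrite $\tclsv V{\pj V(KM)} = \tclsv V{\pj V(K)} \cdot \tclsv V{\pj V(M)}$, then apply the induction hypothesis to turn this into $\pj V(\tcls K) \cdot \pj V(\tcls M) = \pj V(\tcls K \cdot \tcls M)$, and conclude using the inclusion $\tcls K \cdot \tcls M \subseteq \tcls{KM}$, which follows from continuity of multiplication in $\oms X S$.

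For the iteration case $L = K^+$, the Pin-Reutenauer identity~\eqref{eq:PR-plus} gives $\tclsv V{\pj V(K^+)} = \acls{\tclsv V{\pj V(K)}}$. By induction $\tclsv V{\pj V(K)} = \pj V(\tcls K)$, and since $\pj V$ is a surjective $\sigma$-homomorphism, a standard argument shows $\acls{\pj V(\tcls K)} = \pj V(\acls{\tcls K})$. It then suffices to verify that $\acls{\tcls K} \subseteq \tcls{K^+}$, after which applying $\pj V$ yields the desired inclusion $\tclsv V{\pj V(K^+)} \subseteq \pj V(\tcls{K^+})$.

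The hard part will be this last inclusion, which I would derive from the claim that $\tcls{K^+}$ is itself a $\sigma$-subalgebra of $\oms X S$ containing $\tcls K$, so that it contains the smallest such, namely $\acls{\tcls K}$. Closure under multiplication is clear, since $K^+$ is a subsemigroup and multiplication is continuous. For any other operation $\pi \in \sigma$ of arity~$n$, $\pi$ is itself an implicit operation, hence a limit of words $\pi_j \in \{x_1, \ldots, x_n\}^+$. Given $a_1, \ldots, a_n \in \tcls{K^+}$, I write each $a_i$ as the limit of a sequence $a_{i,k} \in K^+$. Then each $\pi_j(a_{1,k}, \ldots, a_{n,k})$ lies in $K^+$, so $\pi(a_{1,k}, \ldots, a_{n,k}) \in \tcls{K^+}$; since $\tcls{K^+}$ is closed, passing to the limit in $k$ yields $\pi(a_1, \ldots, a_n) \in \tcls{K^+}$.
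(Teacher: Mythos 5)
Your proof is correct and follows essentially the same route as the paper's: induction on a rational expression built from the finite generating sets, with the forward inclusion coming from continuity of $\pj V$, and the product and iteration cases handled via the Pin-Reutenauer identities, the fact that $\pj V$ is a $\sigma$-homomorphism, and the inclusions $\tcls K\cdot\tcls M\subseteq\tcls{KM}$ and $\acls{\tcls K}\subseteq\tcls{K^+}$. The only difference is that you also verify this last "easy" inclusion directly (the paper merely records that it always holds, with a reference), and your verification of it is sound.
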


\begin{proof}
  Let $L$ be an arbitrary member of~\Cl C. We need to show that the
  equality~\eqref{eq:fullness} holds. The inclusion from left to right
  is an immediate consequence of the continuity of the mapping~\pj V.
  For the reverse inclusion, we proceed by induction on the
  construction of a rational expression of~$L$ in terms of finite sets
  in~\Cl C. If $L$ is a finite set, then $\pj V(\tcls L)=\pj V(L)$ and $\tclsv
  V{\pj V(L)}=\pj V(L)$, and so the equality~\eqref{eq:fullness} is trivially
  verified. Suppose next that $L_1$ and $L_2$ are elements of~\Cl C
  for which the equality~\eqref{eq:fullness} holds. Since topological
  closure and the application of mappings commutes with union, the
  equality~\eqref{eq:fullness} also holds for $L=L_1\cup L_2$. On the
  other hand, we have the following equalities and inclusions:
  \begin{alignat*}{3}
    \tclsv V{\pj V(L_1L_2)} %
    &=\tclsv V{\pj V(L_1)}\cdot\tclsv V{\pj V(L_2)} %
    &\quad&\text{\parbox[t]{.4\textwidth}%
      {since the Pin-Reutenauer procedure holds for~\Cl
      C}}\\
    &=\pj V(\tcls{L_1})\cdot\pj V(\tcls{L_2})
    &&\text{by the induction hypothesis}\\
    &=\pj V(\tcls{L_1}\cdot\tcls{L_2})
    &&\text{since \pj V is a homomorphism}\\
    &\subseteq\pj V(\tcls{L_1L_2})
    &&\text{by continuity of multiplication.}
  \end{alignat*}
  Taking into account that \pj V is a homomorphism of
  $\sigma$-semigroups and that the inclusion from right to left
  in~\eqref{eq:PR-plus} always holds (see the paragraph preceding Theorem 3.5, p.~\pageref{easy-inclusion-thm-3.5}), one can similarly show that
  \eqref{eq:fullness} holds for $L=L_1^+$.
\end{proof}

The following is an immediate application of
Proposition~\ref{p:PR->full}.

\begin{Cor}
  \label{c:PR->full}
  Let $\sigma$ be an arbitrary implicit signature and \pv V a
  pseudovariety. If \pv V is (strongly) $\sigma$-PR then \pv V is
  (respectively strongly) $\sigma$-full.\qed
\end{Cor}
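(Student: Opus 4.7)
My plan is to apply Proposition~\ref{p:PR->full} twice, once for each of the two implications, differing only in the choice of the class~$\Cl C$ of subsets of~$\oms XS$ that is fed into the proposition. In both cases the conclusion of the proposition directly matches the corresponding notion of fullness, so essentially no work is needed beyond picking the right~$\Cl C$ and unwinding definitions.

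For the weak implication, I would take $\Cl C$ to be the class of rational subsets of~$X^+$. By definition, this is precisely the closure under the rational operations of the singletons $\{x\}$ with $x\in X$, which are finite subsets of~$X\subseteq\oms XS$; so $\Cl C$ has the shape required by Proposition~\ref{p:PR->full}. Under the implicit identification of $L\in\Cl C$ with its image $\pj V(L)\subseteq\oms XV$, the assumption that $\pv V$ is $\sigma$-PR is exactly the statement that the Pin-Reutenauer procedure holds for~$\Cl C$. Proposition~\ref{p:PR->full} then yields that $\pv V$ is full with respect to~$\Cl C$, which is $\sigma$-fullness by definition.

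For the strong implication, I would instead take $\Cl C$ to be the class of all rational subsets of~$\oms XS$, which by definition is the closure under rational operations of the finite subsets of~$\oms XS$, again fitting the shape required by the proposition. Since $\pj V$ is a surjective continuous $\sigma$-homomorphism, it sends $\Cl C$ onto the class of all rational subsets of~$\oms XV$; hence strong $\sigma$-PR translates into the hypothesis that PR holds for~$\Cl C$, and Proposition~\ref{p:PR->full} gives strong $\sigma$-fullness as its conclusion. I do not anticipate any genuine obstacle here, as the corollary is stated as an immediate consequence of the preceding proposition; the only bookkeeping point is to match each flavour (weak or strong) with its natural choice of~$\Cl C$, and to make explicit the translation between closures taken in~$\oms XV$ and sets living in~$\oms XS$ via~$\pj V$.
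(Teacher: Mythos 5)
Your proposal is correct and takes essentially the same approach as the paper: the corollary is stated as an immediate application of Proposition~\ref{p:PR->full}, and you have correctly identified the two choices of $\Cl C$ (rational subsets of $X^+$ for the weak case, rational subsets of $\oms XS$ for the strong case) together with the identification of $\Cl C$ with $\pj V(\Cl C)$ that makes the hypotheses line up.
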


The weak version of the following result is proved
in~\cite[Proposition~3.2]{ACZ:Closures:14}. The same proof
applies to the strong case.

\begin{Prop}
  \label{p:hereditarity}
  Let \pv V and \pv W be two (strongly) $\sigma$-full pseudovarieties
  such that $\pv V\subseteq\pv W$. If \pv W is (respectively strongly)
  $\sigma$-PR, then so is \pv V.
\end{Prop}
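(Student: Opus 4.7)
The approach is a pure diagram chase, exploiting that the inclusion $\pv V\subseteq\pv W$ induces a unique continuous surjective $\sigma$-homomorphism $p_{\pv V,\pv W}\colon\Om XW\to\Om XV$ with $\pj V=p_{\pv V,\pv W}\circ\pj W$. Restricted to the $\sigma$-subalgebras generated by $X$, this map sends $\oms XW$ onto $\oms XV$ and commutes with $\sigma$-operations, in particular with products, $+$, and the formation of $\sigma$-subalgebras generated by subsets.

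First I would settle a preliminary reduction for the strong case: every rational subset $K$ of $\oms XV$ is of the form $\pj V(K_0)$ for some rational subset $K_0\subseteq\oms XS$. This is proved by induction on the rational expression, picking for each singleton $\{u\}\in\oms XV$ a $\sigma$-term $t$ with $[t]_{\pv V}=u$ and taking $\{[t]_{\pv S}\}\subseteq\oms XS$, and observing that $\pj V$ commutes with union, concatenation, and iteration. In the weak case this step is vacuous, since the rational subsets one is testing are by definition of that form.

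Now fix $K,L\in\Cl C$ and write $K=\pj V(K_0)$, $L=\pj V(L_0)$ with $K_0,L_0$ in the corresponding class over $\oms XS$. For the concatenation identity, the idea is to compute
\begin{align*}
  \tclsv V{KL}
  &=\pj V\bigl(\tclsv S{K_0L_0}\bigr)
   =p_{\pv V,\pv W}\bigl(\pj W(\tclsv S{K_0L_0})\bigr)
   =p_{\pv V,\pv W}\bigl(\tclsv W{\pj W(K_0L_0)}\bigr)\\
  &=p_{\pv V,\pv W}\bigl(\tclsv W{\pj W(K_0)}\,\tclsv W{\pj W(L_0)}\bigr)
   =\tclsv V{K}\,\tclsv V{L},
\end{align*}
where the first and the last equalities use $\sigma$-fullness of \pv V, the third and (after applying $p_{\pv V,\pv W}$ as a homomorphism) the fifth use $\sigma$-fullness of \pv W, and the fourth uses that \pv W is $\sigma$-PR. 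The analogous computation handles the iteration identity, using that $p_{\pv V,\pv W}$ commutes with the operator $\acls{\,\cdot\,}$ because $p_{\pv V,\pv W}$ is a surjective $\sigma$-homomorphism, so that $p_{\pv V,\pv W}(\acls{U})=\acls{p_{\pv V,\pv W}(U)}$ for any $U\subseteq\oms XW$. The easy inclusion from right to left in both equalities is, as noted in the paper, a consequence of the continuity of multiplication (and of the observation preceding Theorem~\ref{t:half-PR-for-S-2} for iteration), and is therefore free.

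The computation above is essentially automatic once the map $p_{\pv V,\pv W}$ and the reduction to preimages from $\oms XS$ are set up; there is no real technical obstacle, which is why \cite{ACZ:Closures:14} could prove the weak version with the same argument. The only mild subtlety is ensuring in the strong case that $\pj W(K_0),\pj W(L_0)$ are themselves rational subsets of $\oms XW$ (so that the $\sigma$-PR hypothesis for \pv W is applicable), but this is immediate since $\pj W$ is a continuous $\sigma$-homomorphism and rationality is preserved by such maps.
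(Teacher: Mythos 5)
Your proposal is correct and is essentially the argument the paper relies on: the paper does not reprove this statement but defers to \cite[Proposition~3.2]{ACZ:Closures:14}, whose proof is exactly this diagram chase through $\pj V=p_{\pv V,\pv W}\circ\pj W$, using fullness of both pseudovarieties, the $\sigma$-PR property of \pv W, and the fact that $p_{\pv V,\pv W}$ is a $\sigma$-homomorphism. Your preliminary reduction expressing every rational subset of $\oms XV$ as $\pj V(K_0)$ for a rational $K_0\subseteq\oms XS$ is the right way to make precise the paper's remark that ``the same proof applies to the strong case.''
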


Note that \pv S is trivially $\sigma$-full for every implicit signature $\sigma$.
Combining Theorem~\ref{t:main} with Corollary~\ref{c:PR->full} and
Proposition~\ref{p:hereditarity}, we obtain the following result.

\begin{Cor}
  \label{c:main}
  Let $\sigma$ be a pure unary signature containing $\kappa$. Then a
  pseudovariety \pv V is $\sigma$-PR if and only if it is
  $\sigma$-full.\qed
\end{Cor}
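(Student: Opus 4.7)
The plan is to combine the three ingredients the paper has just built up, with no real work left to do. The statement splits naturally into its two implications, and each one is handled by invoking exactly one earlier result.

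First I would dispose of the ``only if'' direction: if \pv V is $\sigma$-PR, then Corollary~\ref{c:PR->full} (applied in its weak form, since $\sigma$-PR is defined via rational languages $L\subseteq X^+$, and the class of sets of the form $\pj V(L)$ is closed under the rational operations) immediately yields that \pv V is $\sigma$-full. Nothing more is required here.

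For the ``if'' direction, the strategy is to transfer the Pin-Reutenauer property down from the pseudovariety \pv S. Take $\pv W = \pv S$ in Proposition~\ref{p:hereditarity}. Then \pv S is $\sigma$-full trivially (because $\pj S$ is the identity, so~\eqref{eq:fullness} reduces to $\tcls L=\tcls L$), and by Theorem~\ref{t:main}, \pv S is $\sigma$-PR since $\sigma$ is a pure unary signature containing $\kappa$. The hypothesis $\pv V\subseteq\pv S$ is automatic, as every pseudovariety of semigroups is contained in \pv S. Hence Proposition~\ref{p:hereditarity} applies: assuming \pv V is $\sigma$-full, we conclude that \pv V is $\sigma$-PR.

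There is no genuine obstacle here: the corollary is a packaging of Theorem~\ref{t:main}, Corollary~\ref{c:PR->full}, and Proposition~\ref{p:hereditarity}. The only small point deserving a sentence in the write-up is the verification that \pv S is $\sigma$-full, so that Proposition~\ref{p:hereditarity} can legitimately be invoked with $\pv W=\pv S$; the fullness of \pv S is precisely the remark made in the paragraph preceding the corollary. With this noted, the two implications close the proof.
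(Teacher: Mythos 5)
Your proof is correct and follows exactly the same route as the paper: the ``only if'' direction is Corollary~\ref{c:PR->full}, and the ``if'' direction is Proposition~\ref{p:hereditarity} applied with $\pv W=\pv S$, using Theorem~\ref{t:main} for $\sigma$-PR-ness of \pv S and the trivial observation that \pv S is $\sigma$-full.
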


We do not know whether the hypothesis on the signature can be dropped
in Corollary~\ref{c:main}.

\subsection{The group case}
\label{sec:groups}

We now consider the case of pseudovarieties of groups, for the signature~$\kappa$.

Recall that a group is \emph{LERF (locally extendible residually finite)} if
every finitely generated subgroup is closed in the profinite topology. We say
that a pseudovariety of groups \pv H is \emph{LERF} if, for every finite
alphabet~$X$, the relatively free group \omc XH~is LERF. By a classical result
of~\cite{Hall:1950}, the pseudovariety \pv G of all finite
groups is LERF.

By \cite[Proposition~2.9]{Margolis&Sapir&Weil:1999}, \pv G~is in fact
the only nontrivial extension-closed pseudovariety of groups that is
LERF. On the other hand, it is easy to check that, for the pseudovariety
\pv{Ab} of all finite Abelian groups, every subgroup of $\omc X{Ab}$ is
closed~\cite[Proposition~3.8]{Delgado:Abelian-poinlikes-monoid:1998:a}.

A slightly different notion of strongly $\kappa$-PR pseudovariety was
considered by~\cite{Pin&Reutenauer:1991} and~\cite{Delgado:2001}
(where it is simply called PR). Instead of property
\eqref{eq:PR-plus}, the following property is considered:
\begin{equation}
  \label{eq:PR-plus-bis}
  \tclsv V{L^+} = \acls{L}.
\end{equation}
Compared to~\eqref{eq:PR-plus}, the topological closure in the right hand side
of~\eqref{eq:PR-plus-bis} has been dropped. As observed in~\cite[end of Section
4]{ACZ:Closures:14}, Equation~\eqref{eq:PR-plus-bis} fails for the pseudovariety \pv S and
the implicit signature~$\kappa$, for $L=a^+b^+$, since $a^\omega b\in \tclsv
V{L^+}\setminus\acls{L}$. However, the two notions coincide for the
pseudovariety~\pv G \cite[Theorem~2.4]{Pin&Reutenauer:1991}. With same
argument, we generalize this result to LERF pseudovarieties.

\begin{Lemma}
  \label{l:strongly-k-PR-vs-PR}
  Let \pv V be a pseudovariety. If\/ \pv V satisfies \eqref{eq:PR-plus-bis} for a subset $L$ of\/
  \oms XV, then \eqref{eq:PR-plus} also holds. If\/
  \pv V is a LERF pseudovariety of groups and $\sigma=\kappa$, then
  \eqref{eq:PR-plus-bis}~holds for rational subsets $L$ of~\omc XV.
\end{Lemma}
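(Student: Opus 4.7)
The plan is to treat the two assertions independently.

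For the first, assuming~\eqref{eq:PR-plus-bis}, one just combines $L\subseteq\tclsv VL$, which gives $\acls L\subseteq\acls{\tclsv VL}$, with the hypothesis to obtain $\tclsv V{L^+}=\acls L\subseteq\acls{\tclsv VL}$; the reverse inclusion is the always-valid direction of~\eqref{eq:PR-plus} recalled just before Theorem~\ref{t:half-PR-for-S-2}, so~\eqref{eq:PR-plus} indeed holds.

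For the second assertion, let $\sigma=\kappa$, let \pv V be a LERF pseudovariety of groups, and let $L\subseteq\omc XV$ be rational. I would establish the two inclusions of~\eqref{eq:PR-plus-bis} separately. To prove $\acls L\subseteq\tclsv V{L^+}$, the plan is to show that $\tclsv V{L^+}$ is already a $\kappa$-subalgebra of $\omc XV$. Since \pv V is a pseudovariety of groups, $\om XV$ is a compact group, and a standard compactness argument shows that any closed subsemigroup $S$ of a compact group is in fact a subgroup: for $s\in S$, the closure of $\{s^n:n\geq1\}$ contains an idempotent which must equal~$1$, so from a subsequence $s^{n_k}\to 1$ one infers $s^{n_k-1}\to s^{-1}\in S$. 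Applying this to the closure of $L^+$ in $\om XV$ and then intersecting with the subgroup $\omc XV$ shows that $\tclsv V{L^+}$ is a subgroup of $\omc XV$, hence a $\kappa$-subalgebra, which contains $L$ and therefore contains $\acls L$.

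For the reverse inclusion $\tclsv V{L^+}\subseteq\acls L$, the plan is to show that $\acls L$ itself is closed in $\omc XV$, after which $L^+\subseteq\acls L$ will immediately yield the inclusion. The subgroup $\acls L$ of $\omc XV$ generated by $L$ coincides with $(L\cup L^{-1})^+\cup\{1\}$; rationality is preserved by inversion inside a group, by structural induction on a rational expression, using $(AB)^{-1}=B^{-1}A^{-1}$ and $(A^+)^{-1}=(A^{-1})^+$, so $\acls L$ is a rational subset of $\omc XV$. By the Anissimov--Seifert theorem, a subgroup of a group that is a rational subset is finitely generated; hence $\acls L$ is finitely generated as a subgroup, and the LERF hypothesis on \pv V then forces $\acls L$ to be closed in $\omc XV$. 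The main technical ingredient, and the step most worth singling out, is this use of Anissimov--Seifert to pass from the rationality of $L$ to the finite generation of $\acls L$, without which the LERF assumption could not be invoked; the remaining steps are routine facts about compact groups and rational subsets.
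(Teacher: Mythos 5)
Your argument is correct and follows essentially the same route as the paper. The first part is the same collapse of the always-valid chain $\acls{L}\subseteq\acls{\tclsv VL}\subseteq\tclsv V{L^+}$, and for the second part the key step is identical: rationality of $\kacls{L}=(L\cup L^{-1})^+$, Anissimov--Seifert to get finite generation, and LERF to conclude that $\kacls{L}$ is closed, whence $\tclsv V{L^+}\subseteq\kacls{L}$. The only divergence is that you re-derive the easy inclusion $\acls{L}\subseteq\tclsv V{L^+}$ from scratch via a compactness argument (a closed subsemigroup of a compact group is a subgroup); while correct, this is unnecessary since that inclusion always holds, as recalled in the paragraph preceding Theorem~\ref{t:half-PR-for-S-2} and used in the proof of the first part of this very lemma.
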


\begin{proof}
  Since for $L\subseteq\oms XV$, the inclusions %
  $\acls{L} \subseteq\acls{\tclsv VL} \subseteq\tclsv V{L^+}$ always
  hold, it is clear that \eqref{eq:PR-plus-bis} implies
  \eqref{eq:PR-plus}.

  For the second part, we argue as
  in~\cite[p.~4]{ACZ:Closures:14}. Let $L$ be a
  rational subset of~\omc XV. Then, 
  $\kacls {L}=(L\cup L^{-1})^+$ is rational in \omc XV.  By
  a well-known theorem of~\cite{Anissimow&Seifert:1975}, the subgroup $\kacls {L}$ is
  therefore finitely generated. Hence, $\kacls {L}$ is closed in~\omc
  XV, by the assumption that \pv V is a LERF pseudovariety. Finally,
  we have $L^+\subseteq \kacls {L}$, hence $\tclsv V{L^+}\subseteq
  \kacls {L}$, which, combined with the reverse inclusion, which
  always holds, yields the result.
\end{proof}

It can be shown easily that a $\kappa$-PR pseudovariety  of groups
is LERF (see~\cite{Delgado:1997i}).
 Thus, a pseudovariety of groups is strongly $\kappa$-PR if
and only if it is PR in the sense of~\cite{Delgado:2001}.
In \cite[Corollary~3.9]{Delgado:2001}, it is also
established that every ``weakly PR'' pseudovariety of groups is
$\kappa$-full, a result which is considerably improved in the present
paper, in the form of Corollaries~\ref{c:PR->full} and~\ref{c:main}.

It was conjectured by~\cite{Pin&Reutenauer:1991} that \pv G is
strongly $\kappa$-PR. Their conjecture was reduced to another
conjecture, namely that the product of finitely many finitely
generated subgroups of a free group is closed. The latter conjecture
was established by~\cite{Ribes&Zalesskii:1993a}. Combining with
Proposition~\ref{p:hereditarity} and Corollary~\ref{c:PR->full}, we
obtain the following result.

\begin{Thm}
  \label{t:Strongly-PR-vs-full-groups}
  A pseudovariety of groups is strongly $\kappa$-PR if and only if it
  is strongly $\kappa$-full.\qed
\end{Thm}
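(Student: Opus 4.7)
The plan is to combine three ingredients already in hand: Corollary~\ref{c:PR->full}, which gives the forward direction ``(strongly) $\sigma$-PR implies (strongly) $\sigma$-full'' for an arbitrary signature and pseudovariety; Proposition~\ref{p:hereditarity}, which lets one transfer the strongly $\sigma$-PR property down from a larger to a smaller pseudovariety, provided both are strongly $\sigma$-full; and the Ribes--Zalesskii theorem, which, as recalled just before the statement, settles Pin--Reutenauer's conjecture that \pv G is strongly $\kappa$-PR.

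For the forward implication of the equivalence, I would simply invoke Corollary~\ref{c:PR->full} specialised to $\sigma=\kappa$: any strongly $\kappa$-PR pseudovariety (of groups, in particular) is automatically strongly $\kappa$-full, with no further argument needed.

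For the converse, let \pv V be a strongly $\kappa$-full pseudovariety of groups, so $\pv V\subseteq\pv G$. By the Ribes--Zalesskii theorem, \pv G is strongly $\kappa$-PR, and then Corollary~\ref{c:PR->full} applied to \pv G yields that \pv G is itself strongly $\kappa$-full. Thus both \pv V and \pv G are strongly $\kappa$-full with $\pv V\subseteq\pv G$ and \pv G strongly $\kappa$-PR, which are exactly the hypotheses of Proposition~\ref{p:hereditarity} (strong version). Applying that proposition, \pv V inherits the strongly $\kappa$-PR property, completing the equivalence.

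There is really no obstacle here: the substantive content lives entirely in Ribes--Zalesskii's closure theorem for products of finitely generated subgroups of a free group, which is used as a black box, and in the previously established Proposition~\ref{p:hereditarity} and Corollary~\ref{c:PR->full}. The only thing to be careful about is the use of the \emph{strong} versions throughout, since the language $L$ appearing in Pin--Reutenauer's conjecture is a rational subset of the free group $\omc X G$, not merely the image under $\pj G$ of a rational language of $X^+$; this is precisely the setting in which Ribes--Zalesskii's result applies and in which both Corollary~\ref{c:PR->full} and Proposition~\ref{p:hereditarity} are stated in their strong forms.
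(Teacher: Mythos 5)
Your proposal is correct and follows exactly the route the paper sketches: it uses Corollary~\ref{c:PR->full} for the forward implication, and for the converse it observes that $\pv G$ is strongly $\kappa$-PR (via Ribes--Zalesskii settling the Pin--Reutenauer conjecture), hence strongly $\kappa$-full by Corollary~\ref{c:PR->full}, so that Proposition~\ref{p:hereditarity} transfers the strong $\kappa$-PR property from $\pv G$ down to any strongly $\kappa$-full $\pv V\subseteq\pv G$. You also correctly flag the point of care, namely that all three ingredients must be invoked in their strong versions since the rational sets at stake live in $\omc XG$ rather than in $X^+$.
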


The diagram in \figurename~\ref{fig:groups} summarizes the
results of this subsection. 
We say that a pseudovariety of groups \pv H is \emph{strong RZ} if in every
finitely generated free \pv H-group, any finite product of finitely
generated subgroups is closed. We say that \pv H is \emph{weak RZ} if, in every
finitely generated free \pv H-group, any finite product of finitely
generated \emph{closed} subgroups is also closed.

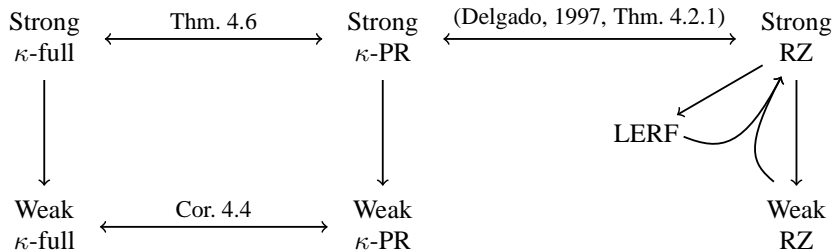
\begin{figure}[H]
  \centering
    \begin{tikzpicture}
      \node (sf) at (0,0) {
        \begin{tabular}[h]{c}
          Strong\\ $\kappa$-full
        \end{tabular}
      }; \node (spr) at ($(sf)+(4.5,0)$) {
        \begin{tabular}[h]{c}
          Strong\\ $\kappa$-PR
        \end{tabular}
      }; \node (srz) at ($(spr)+(5.5,0)$) {
        \begin{tabular}[h]{c}
          Strong\\ RZ
        \end{tabular}
      };
      \node (hall) at ($(srz)+(-2,-1.25)$) {LERF};
      \node (wf) at
      ($(sf)+(0,-2.5)$) {
        \begin{tabular}[h]{c}
          Weak\\ $\kappa$-full
        \end{tabular}
      }; \node (wpr) at ($(wf)+(4.5,0)$) {
        \begin{tabular}[h]{c}
          Weak\\ $\kappa$-PR
        \end{tabular}
      }; \node (wrz) at ($(wpr)+(5.5,0)$) {
        \begin{tabular}[h]{c}
          Weak\\ RZ
        \end{tabular}
      };

      \draw[equiv] (spr) to node[above] {\small Thm.~\ref{t:Strongly-PR-vs-full-groups}} (sf); 
      \draw[equiv] (spr) to node[above] {\small\cite[Thm.~4.2.1]{Delgado:1997i}} (srz); 
      \draw[equiv] (wpr) to node[above] {\small Cor.~\ref{c:main}} (wf);
      \draw[implies] (spr) to (wpr); 
      \draw[implies] (sf) to (wf);
      \draw[implies] (srz) to (wrz);
      \draw[implies] ($(srz)+(-.45,-.35)$) to (hall);

      \draw [implies] ($(hall)-(-.5,.05)$) .. controls ($(hall)-(-1,.25)$)
      and ($(hall)-(-1.3,0.2)$) .. ($(srz)+(-0.2,-.5)$);

      \draw [implies] ($(wrz)+(-.3,.6)$) .. controls ($(hall)-(-1.4,.4)$)
      and ($(hall)-(-1.3,0.2)$) ..  ($(srz)+(-0.2,-.5)$);
    \end{tikzpicture}
  \caption{Summary of results: pseudovarieties of groups}
\label{fig:groups}
\end{figure}

\bibliographystyle{abbrvnat}

\end{document}